\newtheorem{theorem}{Theorem}[section]
\newtheorem{lemma}[theorem]{Lemma}
\newtheorem{prop}[theorem]{Proposition}
\newtheorem{cor}[theorem]{Corollary}
\theoremstyle{definition}
\newtheorem{definition}[theorem]{Definition}
\newtheorem{example}[theorem]{Example}
\newtheorem*{remark}{Remark}
\renewcommand{\mod}[1]{{\ifmmode\text{\rm\ (mod~$#1$)}\else\discretionary{}{}{\hbox{ }}\rm(mod~$#1$)\fi}}
\newcommand{\ep}{\varepsilon}
\newcommand{\A}{{\mathcal A}}
\newcommand{\D}{{\mathcal D}}
\newcommand{\N}{{\mathbb N}}
\renewcommand{\P}{{\mathcal P}}
\newcommand{\R}{{\mathbb R}}
\newcommand{\mfM}{{\mathfrak M}}
\newcommand{\mfK}{{\mathfrak K}}
\newcommand{\li}{\operatorname{li}}
\begin{document}

\title{The distribution of sums and products of \\ additive functions}
\author[Greg Martin and Lee Troupe]{Greg Martin and Lee Troupe}

\begin{abstract}
The celebrated Erd\H os--Kac theorem says, roughly speaking, that the values of additive functions satisfying certain mild hypotheses are normally distributed. In the intervening years, similar normal  distribution laws have been shown to hold for certain non-additive functions and for amenable arithmetic functions over certain subsets of the natural numbers. Continuing in this vein, we show that if $g_1(n), \ldots, g_k(n)$ is a collection of functions satisfying certain mild hypotheses for which an Erd\H os--Kac-type normal distribution law holds, and if $Q(x_1, \ldots, x_k)$ is a polynomial with nonnegative real coefficients, then $Q(g_1(n), \ldots, g_k(n))$ also obeys a normal distribution law. We also show that a similar result can be obtained if the set of inputs $n$ is restricted to certain subsets of the natural numbers, such as shifted primes. Our proof uses the method of moments. We conclude by providing examples of our theorem in action.
\end{abstract}

\maketitle

\section{Introduction}

In 1939, Erd{\H o}s and Kac~\cite{ek40} used probabilistic methods to prove a striking fact concerning a class of strongly additive functions, establishing the following foundational result in the field of probabilistic number theory.

\begin{theorem}[Erd\H os/Kac]\label{EK classical}
Let $g$ be a strongly additive function with $|g(p)|\le1$ for all primes~$p$. Define
\begin{equation} \label{Ag and Bg def}
A_g(x) = \sum_{p\le x} \frac{g(p)}p \quad\text{and}\quad B_g(x) = \bigg( \sum_{p\le x} \frac{g^2(p)}p \bigg)^{1/2},
\end{equation}
and assume that $B_g(x)$ is unbounded. Then for every real number $u$,
\begin{equation} \label{EK limit}
\lim_{x\to\infty} \frac1x \# \big\{ n\le x\colon g(n) < A_g(x) + u B_g(x) \big\} = \frac1{\sqrt{2\pi}} \int_{-\infty}^u e^{-t^2/2}\,dt.
\end{equation}
\end{theorem}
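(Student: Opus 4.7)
The plan is to prove Theorem~\ref{EK classical} by the method of moments: writing $\widetilde g(n) := (g(n) - A_g(x))/B_g(x)$, it suffices to establish that, for every fixed positive integer $k$,
\begin{equation*}
\frac{1}{x}\sum_{n \le x} \widetilde g(n)^k \longrightarrow M_k \qquad \text{as } x \to \infty,
\end{equation*}
where $M_k$ denotes the $k$-th moment of the standard normal distribution, namely $M_{2j} = (2j-1)!!$ and $M_{2j+1} = 0$. Since the normal distribution is determined by its moments, this yields the distributional statement~\eqref{EK limit}.

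The first step is truncation. Since $B_g(x) \to \infty$, I choose $s(x) \to \infty$ with $s(x) = o(B_g(x))$ and set $y = y(x) = x^{1/s(x)}$. The bound $|g(p)| \le 1$ together with Mertens' theorem yields $B_g(x)^2 - B_g(y)^2 \le \log s(x) + O(1) = o(B_g(x)^2)$, so $B_g(y) \sim B_g(x)$, and an analogous argument gives $A_g(x) - A_g(y) = o(B_g(x))$. Writing $g_y(n) := \sum_{p\mid n,\ p \le y} g(p)$ and $g^y(n) := g(n) - g_y(n)$, any $n \le x$ has at most $\log x / \log y = s(x)$ prime divisors exceeding $y$, so $|g^y(n)| \le s(x) = o(B_g(x))$ uniformly in $n$. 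A short H\"older-type argument (with induction on the order of the moment) then lets one replace $g(n) - A_g(x)$ by $g_y(n) - A_g(y)$ inside the moments with error $o(B_g(x)^k)$.

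I next expand the moment of $g_y(n) - A_g(y) = \sum_{p \le y} g(p)(\mathbf{1}_{p \mid n} - 1/p)$ multinomially, obtaining
\begin{equation*}
\frac{1}{x}\sum_{n \le x}(g_y(n) - A_g(y))^k = \sum_{p_1,\ldots,p_k \le y} g(p_1)\cdots g(p_k) \cdot E(p_1,\ldots,p_k),
\end{equation*}
where $E(p_1,\ldots,p_k) := \frac{1}{x}\sum_{n \le x}\prod_i(\mathbf{1}_{p_i \mid n} - 1/p_i)$. For a tuple with distinct primes $q_1,\ldots,q_r$ of multiplicities $m_1,\ldots,m_r$, the quantity $E$ factorizes over the $q_j$ (up to an error $O(1/x)$ coming from the identity $\frac{1}{x}\sum_{n \le x}\mathbf{1}_{q \mid n} = 1/q + O(1/x)$), and an elementary computation shows that each factor vanishes when $m_j = 1$, equals $1/q_j + O(1/q_j^2)$ when $m_j = 2$, and is $O(1/q_j)$ whenever $m_j \ge 2$. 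Using $|g(p)|^{m_j} \le g(p)^2$ for $m_j \ge 2$ (since $|g(p)| \le 1$), the total contribution from a set partition of $\{1,\ldots,k\}$ into $r$ blocks is $O(B_g(y)^{2r})$, plus an arithmetic remainder $O(y^k/x) = x^{-1+o(1)}$ arising from the $O(1/x)$ errors summed over $k$-tuples. Since $2r \le k$ with equality only when every $m_j = 2$, only the ``perfect-pairing'' partitions survive after division by $B_g(x)^k$; these collectively contribute $(k-1)!! \cdot B_g(y)^k(1+o(1))$, equal to $M_k \cdot B_g(x)^k(1+o(1))$ as desired.

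The main obstacle is the combinatorial bookkeeping of the moment expansion: organizing the $k$-fold sum over primes according to the set partition of $\{1,\ldots,k\}$ it induces, bounding the contribution from each partition type via the multiplicity analysis, and confirming that only perfect pairings contribute in the limit. A secondary concern lies in the truncation step, where $s(x)$ must be chosen slowly enough that the error $y^k/x$ is negligible for each fixed $k$ while also satisfying $s(x) = o(B_g(x))$ so that $|g^y(n)|$ is absorbed into the error; this delicate balance is possible precisely because $B_g(x)$ is unbounded.
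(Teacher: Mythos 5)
Your proposal is correct, and it is the same method-of-moments approach (truncate to primes $p\le y$, expand the $k$th power multinomially, sort terms by the induced set partition, and observe that only perfect pairings survive normalization by $B_g(x)^k$) that the paper adopts from Halberstam and Granville--Soundararajan and generalizes in Theorems~\ref{moment theorem} and~\ref{main theorem sieve version}. The paper does not reprove this classical theorem but cites it to Erd\H os and Kac; your argument is essentially the $\ell=1$, $Q(T)=T$, $\A=\{n\le x\}$, $h\equiv 1$ specialization of the paper's own machinery.
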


\noindent
Here, as usual, a function $g\colon \N\to\R$ is {\em additive} if $g(mn)=g(m)+g(n)$ whenever $(m,n)=1$; 
an additive function~$g$ is {\em strongly additive} if $g(p^\alpha)=g(p)$ for every prime $p$ and positive integer~$\alpha$.
In particular, a strongly additive function is completely determined by its values on prime inputs.
For example, $\omega(n)$, the number of distinct prime factors of $n$, is a strongly additive function.

Theorem~\ref{EK classical} tells us that the values of the normalized version
$\big( g(n)-A_g(n) \big) / B_g(n)$
of~$g$ are distributed, in the limit, exactly like random real numbers chosen from the standard normal distribution of mean~$0$ and variance~$1$.
We codify this type of distributional law with the following terminology:

\begin{definition}\label{ek law def}
Let $g\colon \N\to\R$ and $A_g,B_g\colon \R_{\ge0}\to\R$ be functions. We say that {\em $g$ satisfies an Erd\H os--Kac law with mean $A_g$ and variance $B_g$} if equation~\eqref{EK limit} holds for every real number~$u$.
\end{definition}

Different proofs and generalizations of Erd{\H o}s and Kac's theorem abound. In a 1955 paper, Halberstam \cite{hal55} calculated the $m$th moments of the quantity $\big( g(n)-A_g(x) \big) / B_g(x)$ for $m \ge 1$, deducing the Erd{\H o}s--Kac theorem from these calculations. In 2007, Granville and Soundararajan \cite{gs07} calculated these moments using a technique both simpler and more adaptable than Halberstam's methods; as a result, one can prove Erd{\H o}s--Kac laws for strongly additive functions $g$ restricted to certain subsets of the natural numbers. An example of a result in this direction (though predating the work of Granville and Soundararajan) is due to Alladi~\cite{all87}, who proved an analogue of the Erd{\H o}s--Kac theorem for the usual prime-factor-counting functions $\omega$ and $\Omega$ over friable integers. 

In this paper, we adapt the methods of~\cite{gs07} to establish an Erd{\H o}s--Kac law for arbitrary sums and products of strongly additive functions satisfying certain standard hypotheses:

\begin{theorem}\label{main theorem intro version}
Let $Q(T_1,\dots,T_\ell)$ be a polynomial with nonnegative real coefficents.
Let $g_1, g_2, \ldots, g_\ell$ be nonnegative strongly additive functions such that:
\begin{enumerate}
\item each $g_j(p)\ll_j1$ uniformly for all primes~$p$;
\item for each $1\le j\le \ell$, the series $\sum_p {g_j^2(p)}/p$ diverges.
\end{enumerate}
Define 
\begin{equation} \label{main theorem mu kappa def}
\mu(g_j) = \sum_{p\le x} \frac{g_j(p)}p \quad\text{and}\quad \kappa(g_i,g_j) = \sum_{p\le x} \frac{g_i(p)g_j(p)}p\bigg( 1-\frac1p \bigg) \quad\text{and}\quad \sigma^2(g_i) = \kappa(g_i,g_i),
\end{equation}
and suppose that $\sigma^2(g_i) \gg \mu(g_j)$ for sufficiently large $x$ for all $1 \leq i, j \leq \ell$.

Then $Q(g_1(n),\dots,g_\ell(n))$ satisfies an Erd\H os--Kac law with mean
\begin{equation} \label{AQx intro}
A_Q(x) = Q \big( \mu(g_1), \dots, \mu(g_\ell) \big) 
\end{equation}
and variance
\begin{equation} \label{BQx intro}
B_Q(x)^2 = \sum_{i=1}^\ell \sum_{j=1}^\ell \frac{\partial Q}{\partial T_i} \big( \mu(g_1),\dots,\mu(g_\ell) \big) \frac{\partial Q}{\partial T_j} \big( \mu(g_1),\dots,\mu(g_\ell) \big) \kappa(g_i,g_j) .
\end{equation}
\end{theorem}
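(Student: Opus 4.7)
The plan is to apply the method of moments, directly extending the technique of Granville and Soundararajan~\cite{gs07}. For each positive integer $m$, I will show that
\[
M_m(x) := \frac{1}{x}\sum_{n\le x}\biggl(\frac{Q(g_1(n), \ldots, g_\ell(n)) - A_Q(x)}{B_Q(x)}\biggr)^m \longrightarrow \nu_m \quad \text{as } x\to\infty,
\]
where $\nu_m$ denotes the $m$-th moment of the standard normal distribution (so $\nu_{2k+1}=0$ and $\nu_{2k}=(2k-1)!!$). Since the standard normal is determined by its moments, the Fr\'echet--Shohat theorem then upgrades this moment convergence to convergence in distribution, giving the Erd\H os--Kac law in Definition~\ref{ek law def}.

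The first step is to linearize $Q$ via Taylor expansion about $\mu := (\mu(g_1), \ldots, \mu(g_\ell))$. Setting $X_i(n) := g_i(n) - \mu(g_i)$, one writes
\[
Q(g_1(n), \ldots, g_\ell(n)) - A_Q(x) = L(n) + R(n,x),
\]
where $L(n) := \sum_{i=1}^{\ell}\frac{\partial Q}{\partial T_i}(\mu)\, X_i(n)$ is a linear combination of nonnegative strongly additive functions (the coefficients $\frac{\partial Q}{\partial T_i}(\mu)$ being nonnegative because $Q$ has nonnegative coefficients and $\mu$ has nonnegative entries), and $R(n,x)$ is a polynomial of total degree $\ge 2$ in the $X_i(n)$. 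Since $L$ is itself strongly additive with $L(p)\ll 1$ and variance exactly $B_Q(x)^2 = \sum_{i,j}\frac{\partial Q}{\partial T_i}(\mu)\frac{\partial Q}{\partial T_j}(\mu)\,\kappa(g_i,g_j)$ (by the identity $\mathrm{Var}(L) = \sum_p L(p)^2 p^{-1}(1-p^{-1})$ and expansion), a direct adaptation of the moment computation in~\cite[Section 2]{gs07} gives
\[
\frac{1}{x}\sum_{n\le x} L(n)^m = \nu_m\, B_Q(x)^m + o\bigl(B_Q(x)^m\bigr).
\]

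The main obstacle is showing that the remainder $R(n,x)$ contributes negligibly to the moment $M_m(x)$. The hypotheses force the quantities $\mu(g_j)$ and $\sigma^2(g_j)$ to share a common growth rate $\Lambda(x)\to\infty$: from $g_j(p)\ll 1$ we get $\sigma^2(g_j)\ll\mu(g_j)$, while by hypothesis $\mu(g_j)\ll\sigma^2(g_i)\ll\mu(g_i)$ for all $i,j$, forcing $\mu(g_i)\asymp\mu(g_j)\asymp\sigma^2(g_i)\asymp\Lambda(x)$. Writing $d:=\deg Q$, it follows that $\frac{\partial Q}{\partial T_i}(\mu)\ll\Lambda(x)^{d-1}$, hence $B_Q(x)\asymp\Lambda(x)^{d-1/2}$. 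A monomial $X^\alpha$ of total degree $k\ge 2$ in $R(n,x)$ carries a coefficient $\frac{1}{\alpha!}\partial^\alpha Q(\mu)$ of size $O(\Lambda(x)^{d-k})$, and by Cauchy--Schwarz together with the additive-function moment estimate of~\cite{gs07}, its average contributes size at most $O(\Lambda(x)^{d-k/2})\le O(\Lambda(x)^{d-1}) = O(B_Q(x)\cdot\Lambda(x)^{-1/2})$. Expanding $(L+R)^m$ binomially and applying H\"older's inequality to each cross term $L^{m-s}R^s$ then saves a factor of at least $\Lambda(x)^{-s/2}$, so every mixed contribution is $o(B_Q(x)^m)$, completing the proof. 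The genuine technical content is extending the computation of~\cite[Section 2]{gs07} to mixed moments of the form $\frac{1}{x}\sum_{n\le x}\prod_{i}X_i(n)^{\alpha_i}$, which is carried out by expanding each $X_i(n)=\sum_{p\mid n}g_i(p)-\mu(g_i)$ and evaluating the resulting sums over tuples of primes via the standard sieve-type density estimates used there; the combinatorics of prime coincidences yields precisely the Gaussian moment $\nu_m$ in the main term.
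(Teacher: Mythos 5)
Your overall strategy --- method of moments, centering at $\mu$, Taylor-expanding $Q$ about $\mu$ and isolating the linear part $L$ as the main contributor with the degree-$\ge 2$ remainder $R$ suppressed via H\"older --- is exactly the skeleton of the paper's argument. The paper writes $g^\P(a)=\mu^\P(g)+F^\P_g(a)$, expands $\big(Q(\vec\mu^\P+\vec F^\P(a))-Q(\vec\mu^\P)\big)^m$ as the polynomial $R_m(\vec F,\vec\mu)$ from Definition~\ref{Rh def}, and shows that the monomials of minimal $F$-degree~$m$ (which reconstitute $(\sum_i Q_i(\vec\mu) F^\P_{g_i})^m$, i.e.\ your $L^m$) give the Gaussian main term via Proposition~\ref{Rh magic Phi lemma}, while higher $F$-degree monomials are absorbed into the error. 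So the two decompositions are algebraically the same, and the paper's explicit appearance of $B_Q(\P)^2$ in~\eqref{m main} is your $\operatorname{Var}(L)=\sum_{i,j}Q_i(\mu)Q_j(\mu)\kappa(g_i,g_j)$.

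However, there is a concrete misstep that keeps your argument from being a ``direct adaptation'' of~\cite{gs07}: the claim that ``$L$ is itself strongly additive with $L(p)\ll1$.'' Dropping the irrelevant additive constant, the strongly additive part of $L$ is $\tilde L(n)=\sum_i Q_i(\mu)\,g_i(n)$, and since $Q_i(\mu)\asymp \Lambda(x)^{d-1}$ (with $\Lambda\asymp\log\log x$ and $d=\deg Q$), one has $\tilde L(p)\asymp\Lambda^{d-1}$, which grows with $x$ whenever $d\ge2$. The moment estimates of Granville--Soundararajan, and the paper's own Theorem~\ref{moment theorem}, are proved for strongly additive functions bounded by a \emph{fixed} constant $G$, and the implied constants in the error terms depend on $G$. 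Applying them with $G=G(x)\to\infty$ is not automatic: you would need to re-derive the moment bounds while tracking the $G$-dependence (e.g.\ the unpaired-prime contribution of Lemma~\ref{unpaired off lemma} carries a factor $G^{k-2s}$) and then check that the errors are still dominated by the main term. This does work out, but it is precisely the technical content that the paper takes care of --- and it is cleaner to avoid the issue altogether, which is what the paper does: rather than forming the single $x$-dependent unbounded additive function $\tilde L$, it expands $L^m$ into a sum of mixed moments $\sum_{a\in\A}\prod_j F^\P_{g_{i_j}}(a)$ of the uniformly bounded constituents $g_j$ (Proposition~\ref{summing products of k Fs}), pulling the coefficients $Q_i(\mu)$ outside the inner sums, and then reassembles the answer with Proposition~\ref{Rh magic Phi lemma}. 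Your H\"older treatment of the remainder $R$ is sound in outline and matches the paper's bound~\eqref{> m}, but it relies on the same black-box moment estimate for $L$ (and for the centered variables $X_i$) that needs the above care. Finally, note that~\cite{gs07} works with a truncated set of primes $\P(z)$, $z=x^{o(1)}$; the paper's deduction from Theorem~\ref{main theorem sieve version} includes the verification (steps (i)--(iii)) that passing from $g^{\P(z)}$ to $g$ and from $\P(z)$ to $\P(x)$ in the mean and variance changes nothing at the level of the limiting distribution, a point your sketch does not address.
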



While all of the hypotheses of the theorem are necessary for our proof in its full generality, the assumptions on the relative sizes of the $\mu(g_j)$ and $\sigma^2(g_i)$ could usually be relaxed in specific situations; moreover, the assumption that the $g_j$ be pointwise nonnegative is hardly used, other than to ensure that the covariances $\kappa(g_i,g_j)$ are nonnegative so that the right-hand side of equation~\eqref{BQx intro} has a square root in general. In the same vein, we work with strongly additive functions for simplicity, but in principle the methods could establish Theorem~\ref{main theorem intro version} for additive functions whose values on prime powers can depend upon the power as well as the prime.

In the last section of this paper, we give examples of some special cases of this theorem, such as the Erd\H os--Kac law for a product of additive functions (including the integer powers of an additive function); these results, which are justified in Section~\ref{examples sec}, continue to use the notation from equation~\eqref{main theorem mu kappa def}.

\begin{cor} \label{cor1}
Let $g(n)$ be a strongly additive function satisfying the hypotheses of Theorem~\ref{main theorem intro version}. For any positive integer $\delta$, the function $g(n)^\delta$ satisfies an Erd\H os--Kac law with mean $\mu^\delta(g)$ and variance $\delta^2 \mu(g)^{2\delta-2} \sigma^2(g)$.
\end{cor}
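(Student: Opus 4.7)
The plan is to apply Theorem~\ref{main theorem intro version} to the univariate case $\ell = 1$ with $g_1 = g$ and polynomial $Q(T_1) = T_1^\delta$. This reduces the entire argument to a verification of hypotheses and a brief computation of the resulting mean and variance.

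First, I would verify the hypotheses of Theorem~\ref{main theorem intro version}. The polynomial $T^\delta$ has a single nonzero coefficient (equal to $1$), so the requirement that $Q$ have nonnegative real coefficients is immediate. Conditions (a) and (b) in the statement of Theorem~\ref{main theorem intro version}, together with the assumption $\sigma^2(g) \gg \mu(g)$ (the $\ell = 1$ incarnation of the $\sigma^2(g_i) \gg \mu(g_j)$ hypothesis), are all inherited verbatim from the assumption that $g$ satisfies the hypotheses of Theorem~\ref{main theorem intro version}.

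Next, I would compute the mean and variance furnished by Theorem~\ref{main theorem intro version}. Formula~\eqref{AQx intro} immediately gives
$$A_Q(x) = Q(\mu(g)) = \mu(g)^\delta,$$
matching the stated mean. For the variance, the only relevant partial derivative is $\partial Q/\partial T_1 = \delta T_1^{\delta-1}$, so the double sum in~\eqref{BQx intro} collapses to its single nonzero term
$$B_Q(x)^2 = \bigl(\delta\,\mu(g)^{\delta-1}\bigr)^2 \kappa(g,g) = \delta^2\,\mu(g)^{2\delta-2}\,\sigma^2(g),$$
which matches the variance claimed by the corollary.

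There is no serious analytic obstacle here; Theorem~\ref{main theorem intro version} does all the heavy lifting, and what remains is bookkeeping. The only conceptual point worth flagging is that while $g$ itself is additive, the power $g^\delta$ is not additive for $\delta \geq 2$, so Corollary~\ref{cor1} genuinely extends the Erd\H os--Kac phenomenon beyond the additive category --- which is precisely the reason for allowing polynomial combinations in Theorem~\ref{main theorem intro version}.
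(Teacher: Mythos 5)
Your proposal is correct and follows exactly the route taken in the paper's Example~\ref{powers}: specialize Theorem~\ref{main theorem intro version} to $\ell = 1$, $Q(T) = T^\delta$, and read off the mean and variance from equations~\eqref{AQx intro} and~\eqref{BQx intro}, the latter collapsing to the single term $(Q'(\mu(g)))^2 \kappa(g,g) = \delta^2 \mu(g)^{2\delta-2} \sigma^2(g)$. The verification of hypotheses and the computation are exactly as you describe.
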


\begin{cor} \label{cor2}
Let $g_1, \ldots, g_\ell$ be strongly additive functions satisfying the hypotheses of Theorem~\ref{main theorem intro version}. Then for any $\ell \geq 1$, the function $g_1(n) \cdots g_\ell(n)$ satisfies an Erd\H os--Kac law with mean $\mu(g_1)\cdots \mu(g_\ell)$ and variance
\[
\big( \mu(g_1)\cdots \mu(g_\ell) \big)^2 \sum_{1\le i,j\le \ell} \frac{\kappa(g_i,g_j)}{\mu(g_i) \mu(g_j)}.
\]
\end{cor}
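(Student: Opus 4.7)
The plan is to apply Theorem~\ref{main theorem intro version} directly with the single-monomial polynomial $Q(T_1,\dots,T_\ell) = T_1 T_2 \cdots T_\ell$. Since $Q$ has only one coefficient (namely $1$), it trivially has nonnegative real coefficients, and the remaining hypotheses of the main theorem hold for $g_1,\dots,g_\ell$ by assumption. Thus the theorem immediately gives that $Q(g_1(n),\dots,g_\ell(n)) = g_1(n)\cdots g_\ell(n)$ satisfies an Erd\H os--Kac law, and it remains only to evaluate the formulas~\eqref{AQx intro} and~\eqref{BQx intro} for this particular choice of $Q$.

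For the mean, direct substitution into~\eqref{AQx intro} gives $A_Q(x) = Q(\mu(g_1),\dots,\mu(g_\ell)) = \mu(g_1)\mu(g_2)\cdots\mu(g_\ell)$, matching the claimed mean. For the variance, I would compute the partial derivatives
\[
\frac{\partial Q}{\partial T_i} = \prod_{k\ne i} T_k, \qquad\text{so that}\qquad \frac{\partial Q}{\partial T_i}\big(\mu(g_1),\dots,\mu(g_\ell)\big) = \prod_{k\ne i}\mu(g_k) = \frac{\mu(g_1)\cdots\mu(g_\ell)}{\mu(g_i)}.
\]
Substituting into~\eqref{BQx intro} and factoring the common product $\mu(g_1)\cdots\mu(g_\ell)$ out of each row and each column of the double sum then yields
\[
B_Q(x)^2 = \big(\mu(g_1)\cdots\mu(g_\ell)\big)^2 \sum_{1\le i,j\le\ell}\frac{\kappa(g_i,g_j)}{\mu(g_i)\mu(g_j)},
\]
which is exactly the stated variance.

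Since the corollary is a direct consequence of Theorem~\ref{main theorem intro version} applied to this specific polynomial, there is no genuine obstacle beyond the mechanical calculation of partial derivatives and the algebraic simplification above. As a sanity check, specializing to $\ell=\delta$ with $g_1=\cdots=g_\delta=g$ collapses the double sum into $\delta^2\sigma^2(g)/\mu(g)^2$ and recovers the mean $\mu(g)^\delta$ and variance $\delta^2 \mu(g)^{2\delta-2}\sigma^2(g)$ of Corollary~\ref{cor1}, as expected.
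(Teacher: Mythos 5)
Your proof is correct and matches the paper's own argument in Example~\ref{products}: both apply Theorem~\ref{main theorem intro version} with $Q(T_1,\dots,T_\ell)=T_1\cdots T_\ell$, compute $\partial Q/\partial T_i = (T_1\cdots T_\ell)/T_i$, and substitute into equations~\eqref{AQx intro} and~\eqref{BQx intro}. The sanity check against Corollary~\ref{cor1} is a nice addition but not in the paper.
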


As it turns out, the proof of Theorem~\ref{main theorem intro version} is no harder if we replace $\{p\le x\}$ with any finite set $\P$ of primes. Similarly, we may replace the set of inputs $\{n\le x\}$ with a finite set $\A$, at the cost of introducing further multiplicative functions $h(n)$ that measure the local densities of~$\A$ but with no other significant changes. As remarked in~\cite{gs07}, this added flexibility allows for the derivation of several variants of the Erd{\H o}s--Kac theorem. As an example, we establish the following analogue of Theorem~\ref{main theorem intro version} for polynomials evaluated at additive functions on shifted primes:

\begin{theorem}\label{shifted primes intro version}
Let $a$ be a fixed nonzero integer. Under the hypotheses of Theorem \ref{main theorem intro version}, the function $Q(g_1(p-a),\dots,g_\ell(p-a))$ satisfies an Erd\H os--Kac law with $A_Q(x)$ and $B_Q(x)$ as defined in equations~\eqref{AQx intro} and~\eqref{BQx intro}. In other words, for every real number $u$,
\begin{equation} \label{EK limit shifted primes}
\lim_{x\to\infty} \frac1{\pi(x)} \# \big\{ p\le x\colon Q(g_1(p-a),\dots,g_\ell(p-a)) < A_Q(x) + u B_Q(x) \big\} = \frac1{\sqrt{2\pi}} \int_{-\infty}^u e^{-t^2/2}\,dt.
\end{equation}
\end{theorem}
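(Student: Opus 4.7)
The plan is to adapt the method-of-moments proof of Theorem~\ref{main theorem intro version} from the integer set $\{n\le x\}$ to the shifted-prime set $\A_a=\{p-a\colon p\le x\}$. The only new analytic input required is the Bombieri--Vinogradov theorem,
\[
\sum_{d\le x^{1/2}/(\log x)^{B'}}\max_{(b,d)=1}\bigg|\#\{p\le x\colon p\equiv b\pmod{d}\}-\frac{\pi(x)}{\varphi(d)}\bigg|\ll_B\frac{x}{(\log x)^{B}},
\]
which says that the local density of $\A_a$ at a squarefree modulus $d$ with $(d,a)=1$ is $1/\varphi(d)$ relative to $|\A_a|=\pi(x)$. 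Thus the multiplicative function $h(d)=1/\varphi(d)$ plays the role of $h(d)=1/d$ in the Granville--Soundararajan framework behind Theorem~\ref{main theorem intro version}; primes dividing~$a$ contribute only $O_a(1)$ terms and can be absorbed into error estimates throughout.

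Following the outline of the main theorem, I would compute, for each fixed integer $m\ge 0$, the normalized moment
\[
M_m(x)=\frac1{\pi(x)}\sum_{p\le x}\bigg(\frac{Q(g_1(p-a),\dots,g_\ell(p-a))-A_Q(x)}{B_Q(x)}\bigg)^m,
\]
expanding $Q$ into monomials and each $g_j(p-a)=\sum_{q\mid p-a}g_j(q)$. After interchanging summations, the inner quantity becomes $\#\{p\le x\colon q_1\cdots q_r\mid p-a\}$ for various tuples of primes, which Bombieri--Vinogradov converts to $\pi(x)/\varphi(q_1\cdots q_r)$ plus an acceptable error. To keep the moduli within the Bombieri--Vinogradov range, I would first truncate each $g_j$ to prime inputs $q\le y=x^\eta$, with $\eta=\eta(m,\deg Q)>0$ chosen so small that every product $q_1\cdots q_r$ arising in the expansion satisfies $q_1\cdots q_r\le x^{1/2}/(\log x)^{B'}$. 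The error from truncation is controlled in mean square by a Tur\'an--Kubilius inequality on $\A_a$: since $g_j(q)\ll 1$, the tail $\sum_{y<q\le x}g_j(q)^2/(q-1)$ is $O_\eta(1)$ uniformly in $x$, while $B_Q(x)\to\infty$, so the truncation is harmless after normalization.

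After these reductions, the surviving combinatorial computation is formally identical to the one behind Theorem~\ref{main theorem intro version}, except that each factor $1/q$ in the means and covariances is replaced by $1/(q-1)$. The discrepancies
\[
\sum_{q\le x}\frac{g_j(q)}{q-1}-\sum_{q\le x}\frac{g_j(q)}{q}\ll\sum_{q}\frac{1}{q(q-1)}\ll 1,
\]
and the analogous discrepancy for the bilinear sums defining $\kappa(g_i,g_j)$, are bounded; so replacing $\mu(g_j)$ and $\kappa(g_i,g_j)$ by their shifted-prime analogues changes $A_Q(x)$ and $B_Q(x)^2$ only by amounts negligible relative to $B_Q(x)$ and $B_Q(x)^2$ respectively (using $\sigma^2(g_j)\to\infty$ together with the relative-size hypothesis $\sigma^2(g_i)\gg\mu(g_j)$). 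It follows that $M_m(x)$ tends to the $m$th Gaussian moment $\frac1{\sqrt{2\pi}}\int_{-\infty}^{\infty}t^me^{-t^2/2}\,dt$ for every fixed $m$, and the Fr\'echet--Shohat theorem upgrades this moment convergence into the distributional statement~\eqref{EK limit shifted primes}.

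The chief technical obstacle is coordinating the truncation threshold $\eta$ with the moment index $m$: because moduli $q_1\cdots q_r$ arising in the $m$th moment can involve up to $\sim m\deg Q$ prime factors, $\eta$ must shrink as $m$ grows, so the Bombieri--Vinogradov application, the truncation error estimates, and the combinatorial identifications must all be performed uniformly for fixed $m$ before letting $x\to\infty$. This bookkeeping is routine once the Granville--Soundararajan machinery is imported wholesale, but verifying that the polynomial structure of $Q$ does not create unexpectedly large cross-terms (and that the nonnegativity of the coefficients of $Q$ is used correctly to ensure $B_Q(x)$ is well-defined) requires care in tracking constants through the computation.
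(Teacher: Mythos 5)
Your proposal is correct and follows essentially the same route as the paper: cast $\{p-a\colon p\le x\}$ into the Granville--Soundararajan sieve framework with $h(d)/d=1/\varphi(d)$ for $(d,a)=1$, control the accumulated error terms $E_d$ via the Bombieri--Vinogradov theorem by truncating the set $\P$ to primes below a suitable sub-polynomial threshold, and run the method of moments to get Gaussian limiting moments. The one small divergence is in how the truncation error is bounded: the paper uses the deterministic observation that any $a\le x$ has at most $\log x/\log z$ prime factors exceeding $z$, whereas you appeal to a Tur\'an--Kubilius mean-square bound on the shifted primes; both are workable, and your explicit check that replacing $1/(q-1)$ by $1/q$ perturbs $A_Q$ and $B_Q$ negligibly is a step the paper leaves implicit (folded into ``the rest of the proof is the same as for Theorem~\ref{main theorem intro version}'').
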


In the next section we describe the generalized setting, with arbitrary finite sets $\P$ and $\A$ as mentioned above and other notation used throughout the paper, and establish a general distributional limit law (Theorem~\ref{main theorem sieve version}) from the calculation of the appropriate $m$th moments (Theorem~\ref{moment theorem}). Further, in that section, we also derive Theorems~\ref{main theorem intro version} and~\ref{shifted primes intro version} from Theorem~\ref{main theorem sieve version}. The remainder of the paper will then be devoted to establishing Theorem~\ref{moment theorem}, other than giving some examples in Section~\ref{examples sec} that in particular establish Corollaries~\ref{cor1} and~\ref{cor2}.

\section{Notation and restatement of main theorem}  \label{notation section}

Following the methods established in \cite{gs07}, we begin by setting up a sieve-theoretic framework within which we will recast Theorem~\ref{main theorem intro version}.


\begin{definition} \label{A and E def}
Let $\A$ denote a finite set (or multiset) of positive integers.
For every positive integer $d$, set $\A_d = \{ a\in\A \colon d\mid a \}$. As is usual in sieve theory, we suppose that there exists a positive real number $X$ that is a good approximation to the cardinality of~$\A$, and a multiplicative function $h(d)$ (satisfying $0 \le h(d) \le d$) such that $\frac{h(d)}{d} X$ is a good approximation to $\#\A_d$.
More explicitly, we define the remainder terms $E_d$ by 
\begin{align}\label{sieveequation}
\#\A_d = \frac{h(d)}{d} X + E_d,
\end{align}
and we suppose that we have some suitable control over $E_d$, individually or on average. It would certainly suffice for our purposes to have $E_d \ll d^c$ for some fixed $c<1$.
\end{definition}

In the setting of Theorem~\ref{main theorem intro version}, for example, we will have $E_d \ll 1$. In general, one needs only to know the behavior of the terms $E_d$ on average; for example, in the proof of Theorem~\ref{shifted primes intro version} we will handle the terms $E_d$ via the Bombieri--Vinogradov theorem.

\begin{definition} \label{mu sigma def}
Let $\P$ denote a finite set of primes.
For any nonnegative integer $m$, define
\begin{align*}
\D_k(\P) &= \bigcup_{\ell=0}^k \{ p_1\cdots p_\ell\colon p_1,\dots,p_\ell \text{ are distinct elements of } \P \} ;
\end{align*}
note that the elements of $\D_k(\P)$ are squarefree integers contained in the interval $[1,(\max \P)^k]$. In particular, $\D_0(\P)=\{1\}$.

For any function $g\colon\N\to\R$ (but usually for strongly additive functions $g$), we write
\[
g^\P(a) = \sum_{\substack{p \in \P \\ p \mid a}} g(p).
\]
(We remark that this notation would be sensible even if $\P$ were an infinite set of primes, since any given integer $a$ has only finitely many prime factors.)
In keeping with the terminology and notation established in \cite{gs07}, we define the {\em mean} of $g$ over $\P$ as
\begin{equation}  \label{mean}
\mu^\P(g) = \sum_{p\in\P} g(p) \frac{h(p)}p;
\end{equation}
for example, $\mu^\P(1) =  \sum_{p\in\P} {h(p)}/p$.
We continue to borrow from probabilistic terminology by defining the {\em standard deviation} of $g$ over $\P$ as
\[
\sigma^\P(g) = \bigg( \sum_{p \in \P} g(p)^2 \frac{h(p)}p \bigg(1 - \frac{h(p)}{p} \bigg) \bigg)^{1/2},
\]
and we call the quantity $\sigma^\P(g)^2$ the {\em variance} of $g$ over~$\P$.
Furthermore, for two functions $g_1,g_2\colon\N\to\R$ we define the {\em covariance} of $g_1$ and $g_2$ over $\P$ by
\[
\kappa^\P(g_1,g_2) = \sum_{p \in \P} g_1(p)g_2(p) \frac{h(p)}p\bigg( 1-\frac{h(p)}{p} \bigg).
\]
Note that $\sigma^\P(g)^2 = \kappa^\P(g, g)$.
\end{definition}

It is perhaps worth emphasizing that here the set $\P$ represents the ``good'' primes whose contribution to the values of $g(n)$ we want to include, for all integers $n\in\A$; this is as opposed to sieve theory, where one usually names the set of ``bad'' primes to exclude, for the goal of seeking special elements of $\A$ free of such prime factors.

For example, consider the case where $\P$ is the set of all primes up to $x$, and $g(n)=\omega(n)$. A result of Mertens then says that the mean $\mu^\P(\omega) \sim \log\log x$ and the variance $\sigma^\P(\omega)^2 \sim \log\log x$, which are the quantities that we recognize from the classical Erd{\H o}s--Kac theorem.

In the following few definitions, the objects being defined will depend upon $\ell$ functions $g_1,\dots,g_\ell\colon\N\to\R$; we suppress the dependence upon these functions from the notation.

\begin{definition} \label{mf def}
We let $\mfK$ denote the maximum variance among the functions $g_j$:
\[
\mfK = \max_{1 \leq i,j \leq \ell} | \kappa^\P(g_i,g_j) | = \max_{1 \leq i \leq \ell} \sigma^\P(g_i)^2.
\]
The latter equality is justified by the Cauchy--Schwarz inequality, which implies that
\[
| \kappa^\P(g_i,g_j) | \leq \max\{\sigma^\P(g_i)^2, \sigma^\P(g_j)^2\}
\]
Similarly, we let $\mfM$ denote the maximum mean among the functions $g_j$:
\[
\mfM = \max_{1 \leq i \leq \ell} \mu^\P(g_i).
\]
Note that if there exists a positive constant $G$ such that $0\le g_j(p) \le G$ for all $1\le j\le \ell$ and all $p\in\P$, then trivially $\mfK \leq G \mfM$.
\end{definition}

\begin{definition}
Suppose that the functions $g_1,\dots,g_\ell$ take nonnegative values.
Given any polynomial $Q(T_1,\dots,T_\ell)$ in $\ell$ variables with positive coefficients, we define
\begin{equation}  \label{AQP def}
A_Q(\P) = Q(\mu^{\P}(g_1), \ldots, \mu^{\P}(g_\ell))
\end{equation}
and
\begin{equation}  \label{BQP def}
B_Q(\P) = \bigg( \sum_{i=1}^\ell \sum_{j=1}^\ell \frac{\partial Q}{\partial T_i} \big( \mu^{\P}(g_1),\dots,\mu^{\P}(g_\ell) \big) \frac{\partial Q}{\partial T_j} \big( \mu^{\P}(g_1),\dots,\mu^{\P}(g_\ell) \big) \kappa^{\P}(g_i,g_j) \bigg)^{1/2}.
\end{equation}
\end{definition}

\begin{definition} \label{Mh def}
For any polynomial $Q(t_1,\dots,t_\ell)$ in $\ell$ variables and any nonnegative integer~$m$, define the $m$th moment
\[
{M}_m = \sum_{a\in\A} \big( Q(g_1^{{\P}}(a),\dots,g_\ell^{{\P}}(a)) - A_Q(\P) \big)^m.
\]
\end{definition}

\begin{definition} \label{Cm def}
For an integer $m \geq 0$, let
\[
C_m = \begin{cases} \displaystyle \frac{m!}{2^{m/2}(m/2)!}, & \text{if } m \text{ is even}, \\ 0, & \text{if } m \text{ is odd} \end{cases}
\]
denote the $m$th moment of the standard normal distribution.
\end{definition}

These last two definitions foreshadow our strategy of deducing a normal limiting distribution from asypmtotic formulas for the $m$th moments of a centered version of our quantity of interest. Indeed, under all of the notation above, we establish the following formula for the~${M}_m$:

\begin{theorem}\label{moment theorem}
Let $Q(T_1, \ldots,T_\ell)$ be a polynomial of degree $\delta$ with nonnegative real coefficients, let $m$ be a positive integer, and let $\P$ be a finite set of primes. Let $g_1, g_2, \ldots, g_\ell$ be nonnegative strongly additive functions such that, for some fixed $G > 0$, we have $g_j(p) \leq G$ for each $j$ and for all primes $p \in {\P}$.
Let $\A$ be a finite multiset of positive integers such that, in the notation of Definition~\ref{A and E def},
\begin{equation}\label{stronger error bound}
\mu^\P(1)^{\delta m} \sum_{d\in \D_{\delta m}(\P)} |E_d| \ll X \mfK^{\delta m/2 - 1}
\end{equation}
If $m$ is even, then
\[
{M}_m = C_m X B_Q(\P)^m + O(X \mfM^{\delta m - (m + 1)/2} ),
\]
while if $m$ is odd, then
\[
{M}_m \ll X B_Q(\P)^{m-1} + X \mfM^{\delta m - (m + 1)/2}.
\]
Here, the implied constants may depend on the polynomial $Q$, the moment index $m$, the constant $G$, and the finite set $\A$.
\end{theorem}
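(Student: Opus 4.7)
My approach is to expand $Q$ by Taylor's theorem around the vector of means $\vec\mu = (\mu^\P(g_1), \ldots, \mu^\P(g_\ell))$ and then evaluate the resulting mixed moments using the sieve framework of Definition~\ref{A and E def}. Writing $h_i(a) = g_i^\P(a) - \mu^\P(g_i)$, and noting that $Q$ is a polynomial of degree $\delta$, we have the finite identity
\[
Q(g_1^\P(a), \ldots, g_\ell^\P(a)) - A_Q(\P) = \sum_{1 \leq |\alpha| \leq \delta} \frac{\partial^{\alpha} Q(\vec\mu)}{\alpha!} \prod_{i=1}^\ell h_i(a)^{\alpha_i}.
\]
Raising this to the $m$-th power and swapping summation, $M_m$ becomes a finite linear combination of sums of the form $\sum_{a \in \A} \prod_i h_i(a)^{\beta_i}$ whose coefficients are products of $m$ partial derivatives of $Q$ evaluated at $\vec\mu$. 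Throughout, the size estimate $|\partial^\alpha Q(\vec\mu)| \ll \mfM^{\delta - |\alpha|}$, valid because $Q$ has nonnegative coefficients and each $\mu^\P(g_j) \leq \mfM$, will govern the error analysis.

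The central sub-computation is to evaluate each $\sum_{a \in \A} \prod_i h_i(a)^{\beta_i}$. Expanding $h_i(a) = \sum_{p \in \P} g_i(p)(\mathbf{1}_{p \mid a} - h(p)/p)$ produces a multiple sum over prime tuples $(p_1, \ldots, p_r) \in \P^r$ with $r = |\beta|$. The inner sum $\sum_{a \in \A} \prod_j (\mathbf{1}_{p_j \mid a} - h(p_j)/p_j)$ is evaluated via inclusion--exclusion together with the substitution $\#\A_d = (h(d)/d)X + E_d$: the $X$-main-term pieces factorize over the distinct primes in the tuple, and the local factor at any prime $q$ occurring with multiplicity exactly~$1$ \emph{vanishes}, while a prime $q$ with multiplicity $e \geq 2$ contributes a local factor of size $O(h(q)/q)$. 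The $E_d$ pieces accumulate into an overall sieve remainder controlled by hypothesis~\eqref{stronger error bound}.

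The main term of $M_m$ therefore arises from the linear Taylor piece $L(h) = \sum_i \partial_i Q(\vec\mu)\, h_i(a)$ raised to the $m$-th power, restricted to prime tuples $(p_1, \ldots, p_m)$ that form a \emph{perfect matching} on $\{1, \ldots, m\}$ into $m/2$ pairs, which is possible only when $m$ is even. For each such matching $\pi$, summing over the index tuple $(i_1, \ldots, i_m) \in \{1, \ldots, \ell\}^m$ factorizes across the pairs and yields
\[
X \prod_{(j,k) \in \pi} \Bigg( \sum_{i, i'} \partial_i Q(\vec\mu)\, \partial_{i'} Q(\vec\mu)\, \kappa^\P(g_i, g_{i'}) \Bigg) = X B_Q(\P)^m
\]
by definition~\eqref{BQP def}. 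Summing over the $C_m = (m-1)!!$ perfect matchings produces the claimed main term $C_m X B_Q(\P)^m$. When $m$ is odd no perfect matching exists, and the best pattern --- $(m-3)/2$ pairs plus one triple --- contributes $\ll X \mfM^{(\delta-1)m}\mfK^{(m-1)/2} \ll X B_Q(\P)^{m-1}$, matching the odd-case bound.

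All remaining contributions are errors, and the main obstacle will be the bookkeeping required to verify that each is $O(X \mfM^{\delta m - (m+1)/2})$. These fall into three families: (a) contributions involving at least one higher-order Taylor factor $\partial^\alpha Q(\vec\mu)$ with $|\alpha| \geq 2$, which produce monomials of total degree $|\beta| \geq m+1$ and thus, after combining the derivative-size bound with the matching-size bound and invoking $\mfK \leq G \mfM$ from Definition~\ref{mf def}, give a contribution $\ll X \mfM^{\delta m - (m+1)/2}$; (b) linear-Taylor contributions whose prime tuple fails to be the optimal matching pattern (some cluster beyond a pair in the even case, or beyond one triple-plus-pairs in the odd case), which cost a further factor of at least $\mfK$ and are thus well within the error bound; and (c) sieve remainders, bounded by hypothesis~\eqref{stronger error bound} together with the containment $\D_r(\P) \subseteq \D_{\delta m}(\P)$ for all $r \leq \delta m$ that arise.
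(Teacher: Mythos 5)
Your proposal follows essentially the same route as the paper: the Taylor expansion of $Q$ about $\vec\mu$ is the paper's $R_m$-monomial expansion (Definition~\ref{Rh def}) in disguise, your $h_i(a)$ is the paper's $F^\P_{g_i}(a)$, the inclusion--exclusion evaluation of $\sum_{a\in\A}\prod_j(\mathbf 1_{p_j\mid a}-h(p_j)/p_j)$ is Lemma~\ref{fra calculation lemma}, the vanishing of the local factor at multiplicity-one primes and the $O(h(q)/q)$ bound at higher multiplicities are Lemma~\ref{simple H lemma}, and the perfect-matching factorization identifying $C_m X B_Q(\P)^m$ is the content of Lemmas~\ref{paired off lemma}--\ref{unpaired off lemma} together with Proposition~\ref{Rh magic Phi lemma}. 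One small slip: for odd $m$ your computed bound $X\mfM^{(\delta-1)m}\mfK^{(m-1)/2}$ is in general $\ll X\mfM^{\delta m-(m+1)/2}$ via $\mfK\le G\mfM$ (and this is what the paper shows), but it is \emph{not} in general $\ll X B_Q(\P)^{m-1}$ when $\delta>1$; since the theorem's stated error is the sum of both quantities, your conclusion is unaffected.
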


Proving this theorem is the primary goal of this paper, and we will begin the proof in the next section. For the remainder of this section, however, we expound upon the consequences of Theorem~\ref{moment theorem} to various Erd{\H o}s--Kac laws.

Let $\tilde{\A}$ denote an infinite set of positive integers, and define
\begin{equation}  \label{remainder assumption infinite}
\A(x) = \{a \in \tilde{A} \colon a \leq x \}
\end{equation}
(indeed, $\tilde{\A}$ may even be a multiset, as long as ${\A}(x)$ is finite for every~$x$).
Furthermore, let $\tilde{\P}$ denote an infinite set of primes, and define $\P(z) = \{p \in \tilde{\P} \colon p \leq z\}$.
Using Theorem \ref{moment theorem}, we can prove the following result, which (as we subsequently show) implies Theorems~\ref{main theorem intro version} and~\ref{shifted primes intro version}.

\begin{theorem}\label{main theorem sieve version}
Let $Q(T_1, \ldots, T_\ell)$ be a polynomial of degree $\delta$ with nonnegative real coefficients. Let $\tilde{\A}$ and $\tilde{\P}$ be infinite sets as described above, and assume that there exists a positive constant $\eta$ such that
\[
\mu^{\P(t)}(1) = \sum_{p\le t} \frac{h(p)}p \sim \eta\log\log t.
\]
Let $x$ and $z=z(x)$ be parameters tending to infinity, with $z$ chosen so that for all integers $k\ge0$,
\begin{equation}  \label{main theorem sieve version remainder}
\mu^{\P(z)}(1)^{k} \sum_{d\in \D_{k}(\P(z))} |E_d| \ll_k X \mfK^{k/2 - 1},
\end{equation}
where $\mfK$ is as in Definition~\ref{mf def} with $\P$ replaced by~$\P(z)$.
Let $g_1, g_2, \ldots, g_\ell$ be nonnegative strongly additive functions such that for each $1\le j\le\ell$:
\begin{enumerate}
\item $g_j(p)\ll1$ uniformly for all primes $p \in \tilde{\P}$;
\item the series $\sum_{p\in\tilde\P} {g_j^2(p)}/p$ diverges;
\item $\sigma^{\P(z)}(g_j)^2 \gg \mfM$, with $\mfM$ as in Definition~\ref{mf def} with $\P$ replaced by $\P(z)$.
\end{enumerate}
Further, assume that
\begin{equation}  \label{log x log z little o}
\frac{\log x}{\log z} = o( \mfM^{1/2} ).
\end{equation}

Then the values $Q(g_1(a), \ldots, g_\ell(a))$, as $a$ runs through the elements of $\A$, satisfy an Erd{\H o}s--Kac law with mean $A_Q(\P(x))$ from equation~\eqref{AQP def}
and variance $B_Q(\P(x))^2$ from equation~\eqref{BQP def};
in other words, for every $u\in\R$,
\begin{equation} \label{main theorem sieve version conclusion}
\lim_{x\to\infty} \frac{\# \big\{a\in\A(x) \colon Q\big(g_1(a), \ldots, g_\ell(a)\big) < A_Q(\P(x)) + u B_Q(\P(x)) \big\}}{\#\A(x)} = \frac1{\sqrt{2\pi}} \int_{-\infty}^u e^{-t^2/2}\,dt.
\end{equation}
\end{theorem}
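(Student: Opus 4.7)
The plan is to apply Theorem~\ref{moment theorem} with $\P = \P(z)$ and $\A = \A(x)$ to obtain moment asymptotics for the centered truncated quantity, invoke the method of moments (Fr\'echet--Shohat) to convert these into a normal limit law, and finally remove the truncation at $z$ by means of hypothesis~\eqref{log x log z little o}.

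First I would check that~\eqref{main theorem sieve version remainder} is exactly the hypothesis~\eqref{stronger error bound} of Theorem~\ref{moment theorem} for $\P = \P(z)$ and the degree-$\delta$ polynomial $Q$, so that Theorem~\ref{moment theorem} applies. Hypotheses (a) and (b) force $\sigma^{\P(z)}(g_j)^2 \to \infty$ for each $j$, and hypothesis (c) then forces $\mfM \to \infty$. Using the nonnegativity of the coefficients of $Q$ and of each $\kappa^{\P(z)}(g_i,g_j)$, one obtains a lower bound
\[
B_Q(\P(z))^2 \;\ge\; \sum_{i=1}^\ell \Bigl(\tfrac{\partial Q}{\partial T_i}\bigl(\mu^{\P(z)}(g_1),\dots,\mu^{\P(z)}(g_\ell)\bigr)\Bigr)^2 \sigma^{\P(z)}(g_i)^2 \;\gg\; \mfM^{2\delta-1},
\]
so that the error term $X \mfM^{\delta m - (m+1)/2}$ from Theorem~\ref{moment theorem} is a factor $\mfM^{-1/2}$ smaller than the main term $C_m X B_Q(\P(z))^m$. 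With $\#\A(x) \sim X$ (the $k = 0$ case of~\eqref{main theorem sieve version remainder}), this yields, for every nonnegative integer $m$,
\[
\frac{1}{\#\A(x)\,B_Q(\P(z))^m} \sum_{a \in \A(x)} \Bigl( Q(g_1^{\P(z)}(a),\dots,g_\ell^{\P(z)}(a)) - A_Q(\P(z)) \Bigr)^m \longrightarrow C_m.
\]
Since the normal distribution is uniquely determined by its moments, the Fr\'echet--Shohat theorem converts this into convergence in distribution of the normalized truncated quantity to the standard normal.

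The final step is to replace $g_j^{\P(z)}$ by $g_j$, and the truncated normalizing constants $A_Q(\P(z)), B_Q(\P(z))$ by $A_Q(\P(x)), B_Q(\P(x))$. For each $a \in \A(x)$, at most $\log x/\log z$ prime factors of $a$ can exceed $z$, so by hypothesis (a),
\[
0 \le g_j(a) - g_j^{\P(z)}(a) \ll \frac{\log x}{\log z} = o(\mfM^{1/2}),
\]
and a Taylor expansion of $Q$ around $\bigl(g_1^{\P(z)}(a),\dots,g_\ell^{\P(z)}(a)\bigr)$, together with the bound $\partial Q/\partial T_i \ll \mfM^{\delta-1}$ at the relevant arguments (which holds for a fraction $1-o(1)$ of $a\in\A(x)$, by a routine Chebyshev estimate), gives
\[
Q\bigl(g_1(a),\dots,g_\ell(a)\bigr) - Q\bigl(g_1^{\P(z)}(a),\dots,g_\ell^{\P(z)}(a)\bigr) = o(\mfM^{\delta-1/2}) = o\bigl(B_Q(\P(z))\bigr).
\]
A parallel calculation, based on $\mu^{\P(x)}(g_j) - \mu^{\P(z)}(g_j) \ll \log(\log x/\log z) = o(\mfM^{1/2})$ (which uses $\mu^{\P(t)}(1) \sim \eta\log\log t$ and hypothesis (a)), yields $A_Q(\P(x)) - A_Q(\P(z)) = o(B_Q(\P(z)))$ and $B_Q(\P(x))/B_Q(\P(z)) \to 1$. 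A Slutsky-type argument then upgrades the convergence of the truncated normalized quantity to~\eqref{main theorem sieve version conclusion}.

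The main obstacle is the bookkeeping in this last step: truncation errors appear in every argument of $Q$, in the centering constant $A_Q$, and in the normalizing constant $B_Q$, and each of them must be simultaneously absorbed into $o(B_Q(\P(z)))$. The joint growth condition~\eqref{log x log z little o} is precisely the device that makes every such error negligible uniformly for the typical $a\in\A(x)$; a related subtlety is verifying that the polynomial structure of $Q$ together with the nonnegativity and (c) are enough to ensure the lower bound $B_Q(\P(z))^2 \gg \mfM^{2\delta-1}$ used in the moment step.
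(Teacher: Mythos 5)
Your proposal matches the paper's proof essentially step for step: apply Theorem~\ref{moment theorem} with $\P=\P(z)$ and $\A=\A(x)$, obtain the lower bound $B_Q(\P(z))^2\gg\mfM^{2\delta-1}$ from hypothesis~(c) and the nonnegativity of the coefficients and covariances, invoke the method of moments to get the Gaussian limit for the truncated, $\P(z)$-normalized quantity, and then remove the truncation via~\eqref{log x log z little o} by controlling the three discrepancies (the arguments $g_j(a)$ versus $g_j^{\P(z)}(a)$, and the constants $A_Q$, $B_Q$ at $\P(x)$ versus $\P(z)$). Your Chebyshev observation in the truncation-removal step is in fact slightly more careful than the paper's wording, which asserts the mean-value-theorem bound $\ll \mfM^{\delta-1}\cdot o(\mfM^{1/2})$ ``for every $a\in\A(x)$'' even though $g_j^{\P(z)}(a)$ can be much larger than $\mfM$ for atypical~$a$; your ``fraction $1-o(1)$ of $a$'' version is what actually feeds into the final continuity/Slutsky step.
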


Before we show that Theorem~\ref{main theorem sieve version} is a consequence of Theorem~\ref{moment theorem}, we make the following convention regarding constants implicit in $O$, $\ll$, and $\asymp$ notation: these constants may always depend upon the polynomial $Q$ and its number of arguments (usually $\ell$), the pointwise bound~$G$ for the additive functions $g_j$, the integer $m$ dictating which moment we are looking at, and the finite sets $\A$ and $\P$ where appropriate. (We remind the reader that we write $f \asymp g$ if there exist absolute constants $C > c > 0$ such that $c  f(x) \leq g(x) \leq C  f(x)$ throughout the domain of $f$ and~$g$.) On the other hand, these implicit constants will never depend upon the parameters $x$ and $z$, and therefore will be independent of all quantities that depend upon these parameters, such as $M_m$, $\mu^\P(g_j)$ and $\mfM$, and $\kappa^\P(g_i,g_j)$ and~$\mfK$.

\begin{proof}[Deduction of Theorem~\ref{main theorem sieve version} from Theorem~\ref{moment theorem}.]
For this proof, we will set $\A=\A(x)$ and $\P = \P(z)$ (and will write out $\P(x)$ explicitly when necessary). We begin by noting some consequences of the assumptions of Theorem~\ref{main theorem sieve version}.
First, thanks to assumption~(a), there exists a constant $G>0$ such that $g_j(p) \leq G$ for each $j$ and for all primes $p\in\tilde{\P}$. In particular, all of the hypotheses of Theorem~\ref{moment theorem} are satisfied for any positive integer~$m$.

Next, for each $1\le j\le\ell$ we have $\mfM \ge \mu^\P(g_j) \ge \frac1G \sigma^\P(g_j)^2 \gg \mfM$ by assumption~(c),
so that each $\mu^\P(g_j) \asymp \mfM$ and hence $\mfK \asymp \mfM$; furthermore, $\mfM \to \infty$ as $x \to \infty$ thanks to assumption~(b). In summary, for every $1\le j\le\ell$,
\begin{equation} \label{all asymp}
\sigma^\P(g_j)^2 \asymp \mu^\P(g_j) \asymp \mfK \asymp \mfM \to \infty.
\end{equation}
Furthermore, by Cauchy--Schwarz, we also have $0 \le \kappa^\P(g_i,g_j) \ll \mfM$.
Note also that when $m=0$, the assumption~\eqref{main theorem sieve version remainder} becomes simply $|E_1| \ll \frac X\mfK$; this implies that $\#\A = \#\A_1 = \frac{h(1)}1X+E_1 = X (1+O(\frac1\mfK)) \sim X$ as $x\to\infty$.

Since $Q$ is of degree $\delta$, there exists $1\le j_0\le\ell$ such that
$\frac{\partial Q}{\partial T_{j_0}} ( T_1,\dots,T_\ell )^2$
is of degree $2\delta - 2$. It follows from the nonnegativity of the covariances and of the coefficients of $Q$ that
\begin{align}
B_Q(\P)^2 &= \sum_{i=1}^\ell \sum_{j=1}^\ell \frac{\partial Q}{\partial T_i} \big( \mu^\P(g_1),\dots,\mu^\P(g_\ell) \big) \frac{\partial Q}{\partial T_j} \big( \mu^\P(g_1),\dots,\mu^\P(g_\ell) \big) \kappa^\P(g_i, g_j) \notag \\
&\ge \frac{\partial Q}{\partial T_{j_0}} \big( \mu^\P(g_1),\dots,\mu^\P(g_\ell) \big)^2 \kappa^\P(g_{j_0},g_{j_0}) \gg \mfM^{2\delta - 2} \sigma^\P(g_{j_0})^2 \gg \mfM^{2\delta - 1} \label{B lower bound}
\end{align}
by equation~\eqref{all asymp}. If $m$ is even, then Theorem~\ref{moment theorem} (with $\A=\A(x)$ and $\P = \P(z)$) implies
\[
\frac{{M}_m}{XB_Q(\P)^m} = C_m + O(\mfM^{-1/2}) = C_m + o(1);
\]
on the other hand, if $m$ is odd, then Theorem~\ref{moment theorem} implies
\[
\frac{{M}_m}{XB_Q(\P)^m} \ll B_Q(\P)^{-1} + \mfM^{-1/2} \ll \mfM^{-2\delta+1} + \mfM^{-1/2} = o(1).
\]
These estimates establish the limits
\begin{equation} \label{limits needed}
\lim_{x \to \infty} \frac{{M}_m}{\#\A \cdot B_Q(\P)^m} = \lim_{x \to \infty} \frac{{M}_m}{XB_Q(\P)^m} = \begin{cases}
C_m, &\text{if $m$ is even}, \\
0, &\text{if $m$ is odd}, \end{cases}
\end{equation}
where $M_m$ is as in Definition~\ref{Mh def}; the first equality is due to the fact that $\#\A \sim X$ as $x\to\infty$.
Note that $M_0 = \#\A$, and so the limit~\eqref{limits needed} is trivially true when $m=0$ as well.


It follows from the limits~\eqref{limits needed}, by the method of moments (in a way that is standard in these applications to Erd\H os--Kac theorems; see~\cite[Section 7]{mt18} for more details on this type of deduction), that
\begin{equation} \label{not quite right conclusion}
\lim_{x\to\infty} \frac{\# \big\{a\in\A(x) \colon Q\big(g_1^\P(a), \ldots, g_\ell^\P(a)\big) < A_Q(\P) + u B_Q(\P) \big\}}{\#\A(x)} = \frac1{\sqrt{2\pi}} \int_{-\infty}^u e^{-t^2/2}\,dt,
\end{equation}

Note that this deduction is not exactly the same as the conclusion of the theorem we are proving: we would rather that the polynomial $Q$ were being evaluated at the original additive functions $g_j(a)$ rather than their truncations $g_j^\P(a)$, and that both occurrences of $\P=\P(z)$ on the right-hand side of the inequality were instead $\P(x)$. However, it is easy to see (essentially by the continuity of the integral as a function of~$u$) that we can make these adjustments provided that:
\begin{enumerate}
\item[(i)] $A_Q(\P(x)) - A_Q(\P(z)) = o\big( B_Q(\P(z)) \big)$;
\item[(ii)] $B_Q(\P(x)) - B_Q(\P(z)) = o\big( B_Q(\P(z)) \big)$;
\item[(iii)] $Q\big(g_1(a), \ldots, g_\ell(a)\big) - Q\big(g_1^{\P(z)}(a), \ldots, g_\ell^{\P(z)}(a)\big) = o\big( B_Q(\P(z)) \big)$ for every $a\in\A(x)$.
\end{enumerate}
First, note that $A_Q(\P(z)) \asymp \mfM^\delta$ by equations~\eqref{AQP def} and~\eqref{all asymp}. Furthermore, equation~\eqref{B lower bound} gives the lower bound $B_Q(\P(z)) \gg \mfM^{\delta-1/2}$, and the corresponding upper bound follows from equation~\eqref{all asymp} and the estimate $\kappa^\P(g_i,g_j) \ll \mfM$.
Also note that for each $1\le j\le\ell$,
\begin{align*}
0 \le \mu^{\P(x)}(g_j) - \mu^{\P(z)}(g_j) = \sum_{z<p\le x} g_j(p) \frac{h(p)}p \le G \sum_{z<p\le x} \frac{h(p)}p = G\eta \log \frac{\log x}{\log z} + O(1) \ll \log \mfM
\end{align*}
by the assumption~\eqref{log x log z little o}; by a similar calculation, $\kappa^\P(g_i,g_j) \ll \log \mfM$ for all $1\le i,j\le \ell$.
The difference
\[
A_Q(\P(x)) - A_Q(\P(z)) = Q\big( \mu^{\P(x)}(g_1), \dots, \mu^{\P(x)}(g_\ell) \big) - Q\big( \mu^{\P(z)}(g_1), \dots, \mu^{\P(z)}(g_\ell) \big)
\]
can therefore be bounded, using the multivariable mean value theorem, by $\mfM^{\delta-1} \log \mfM = o(\mfM^{\delta-1/2}) = o\big( B_Q(\P(z)) \big)$, establishing~(i).

An analogous argument shows that the difference $B_Q(\P(x))^2 - B_Q(\P(z))^2$ can be bounded by $\mfM^{2\delta-2} \log \mfM$; since
\[
B_Q(\P(x)) - B_Q(\P(z)) = \frac{B_Q(\P(x))^2 - B_Q(\P(z))^2}{B_Q(\P(x)) + B_Q(\P(z))} \asymp \frac{B_Q(\P(x))^2 - B_Q(\P(z))^2}{\mfM^{\delta-1/2}},
\]
we see that $B_Q(\P(x)) - B_Q(\P(z)) \ll \mfM^{\delta-3/2} \log \mfM = o\big( B_Q(\P(z)) \big)$, establishing~(ii).

Finally, for any $a\in\A(x)$ and any $1\le j\le\ell$,
\[
g(a) - g^{\P(z)}(a) = \sum_{\substack{z<p\le x \\ p\mid a}} g(p)
\]
since $a\le x$; the number of summands in this sum is at most $\frac{\log x}{\log z}$, whence $g^{\P(x)}(a) - g^{\P(z)}(a) \le G \frac{\log x}{\log z} = o(\mfM^{1/2})$ by the assumption~\eqref{log x log z little o}. Again the multivariable mean value theorem gives the estimate $Q\big(g_1(a), \ldots, g_\ell(a)\big) - Q\big(g_1^{\P(z)}(a), \ldots, g_\ell^{\P(z)}(a)\big) \ll \mfM^{\delta-1} \cdot o(\mfM^{1/2}) = o\big( B_Q(\P(z)) \big)$, establishing~(iii).

These estimates confirm that we can deduce equation~\eqref{main theorem sieve version conclusion} from equation~\eqref{not quite right conclusion}, which concludes the proof of Theorem~\ref{main theorem sieve version}.
\end{proof}

Theorem \ref{main theorem intro version} follows quickly from Theorem \ref{main theorem sieve version}.

\begin{proof}[Deduction of Theorem~\ref{main theorem intro version} from Theorem~\ref{main theorem sieve version}.]
Let $\tilde{\P}$ be the set of all primes and $\tilde{\A}$ the set of all positive integers; then $\#\A_d = \frac{x}{d} + O(1)$, and so setting $X=x$ and $h(d)=1$ for all $d\ge1$ yields $|E_d| \ll 1$, as well as $\mu^{\P(t)}(1) = \sum_{p\le t} \frac1p \sim \log\log t$. Let the polynomial $Q(T_1, \ldots, T_\ell)$ and the additive functions $g_1, \ldots, g_\ell$ satisfy the hypotheses of Theorem~\ref{main theorem intro version}; note that one of those hypotheses, namely $\kappa(g_i,g_i) \gg \mu(g_j)$ (with the implicit parameter $x$ replaced by $z$), implies that $\sigma^{\P(z)}(g_j)^2 \gg \mfM$.

Let $z=z(x)$ be the smallest positive real number satisfying $z^{\max\{1,\mfM^{1/3}\}} \ge x$, with $\mfM$ as in Definition~\ref{mf def} with $\P$ replaced by~$\P(z)$.
Since $\mfM^{1/3}$ is an increasing, right-continuous function of $z$, this minimum is well-defined and tends to infinity with~$x$. Also, $z \ll_\ep x^\ep$ for any $\ep>0$ since $\mfM$ tends to infinity as well (again because $\sum_{p\le z} \frac{g_j(p)}p \ge \frac1G\sum_{p\le z} \frac{g_j(p)^2}p$, with the latter series diverging by assumption as $z\to\infty$).
We verify quickly that the hypothesis~\eqref{main theorem sieve version remainder} is satisfied for all $k \ge 0$ via the calculation
\begin{align*}
\mu^{\P(z)}(1)^{k} \sum_{d \in \D_{k}(\P(z))} |E_d| &\ll (\log\log x)^{k} \# \D_{k}(\P(z)) \\ &\le (\log\log x)^{k} z^{k} \ll_{\ep} x^\ep \ll_\ep \frac X{\log\log x} \ll X\mfK^{k/2-1},
\end{align*}
since each $\kappa^{\P(z)}(g_i,g_j) \le \kappa^{\P(x)}(g_i,g_j) \le \sum_{p\le x} \frac{G^2}p(1-\frac1p) \ll \log\log x$. We also confirm that $\frac{\log x}{\log z} = \max\{1,\mfM^{1/3}\} = o(\mfM^{1/2})$, which completes the verification of the hypotheses of Theorem~\ref{main theorem sieve version}.

Therefore the conclusion of Theorem~\ref{main theorem sieve version} holds, and it is easy to see that this is identical to the conclusion of Theorem~\ref{main theorem intro version} since $\mu^{\P(x)}(g_j) = \mu(g_j)$ and $\kappa^{\P(x)}(g_i,g_j) = \kappa(g_i,g_j)$.
\end{proof}

Theorem \ref{shifted primes intro version} follows in almost exactly the same way, other than requiring a more powerful tool in the Bombieri--Vinogradov theorem to control the accumulation of the sieve error terms. See for example~\cite[Theorem 17.1]{ik04} for the statement of the Bombieri--Vinogradov theorem for the function $\psi(x;q,a)$, from which it is simple to derive the analogous version for $\pi(x;q,a)$ (an example of such a derivation is the proof of~\cite[Corollary 1.4]{AH}):

\begin{prop}\label{bombvino}
For any positive real number $A$, there exists a positive real number $B=B(A)$ such that for all $x\ge2$,
\begin{align}
\sum_{2\le q \le x^{1/2}(\log x)^{-B}} \max_{(a, q) = 1} \left| \pi(x; q, a) - \frac{\li(x)}{\varphi(q)} \right| &\ll_A \frac{x}{(\log x)^A}, \label{pi BV}
\end{align}
where $\li(x) = \int_2^x \frac{dt}{\log t}$ is the usual logarithmic integral.
\end{prop}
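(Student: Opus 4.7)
The plan is to deduce the $\pi$-version of Bombieri--Vinogradov from the $\psi$-version of \cite[Theorem 17.1]{ik04} in two standard steps: first pass from $\psi$ to $\theta$, then convert $\theta$ to $\pi$ by partial summation. Throughout, I would use the strong form of Bombieri--Vinogradov in which the maximum $\max_{y\le x}$ appears inside the absolute value, since that is precisely what the Abel-summation step requires.

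For the first step, prime powers contribute at most
\[
|\psi(y;q,a) - \theta(y;q,a)| \le \sum_{\substack{p^k\le x \\ k\ge 2}}\log p \ll \sqrt{x}\log x
\]
for every $q$, every $y\le x$, and every $(a,q)=1$; summing this estimate over $q\le x^{1/2}(\log x)^{-B}$ contributes $O(x/(\log x)^{B-2})$, which is absorbed into the Bombieri--Vinogradov error by choosing $B=B(A)$ large enough. Consequently, writing $E(t;q,a) = \theta(t;q,a) - t/\varphi(q)$ and $E^*(x;q) = \max_{y\le x}\max_{(a,q)=1}|E(y;q,a)|$, the strong form carries over to $\theta$: one has $\sum_{q\le Q}E^*(x;q) \ll_A x/(\log x)^A$ for any desired $A$, provided $B$ is chosen accordingly.

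For the second step, Abel summation expresses
\[
\pi(x;q,a) = \frac{\theta(x;q,a)}{\log x} + \int_2^x \frac{\theta(t;q,a)}{t\log^2 t}\,dt,
\]
and integration by parts on $\li(x)$ yields the parallel identity with $\theta(t;q,a)$ replaced by $t/\varphi(q)$, up to an $O(1/\varphi(q))$ boundary term arising from the endpoint $t=2$. Subtracting and taking absolute values,
\[
\bigg|\pi(x;q,a) - \frac{\li(x)}{\varphi(q)}\bigg| \le \frac{|E(x;q,a)|}{\log x} + \int_2^x \frac{|E(t;q,a)|}{t\log^2 t}\,dt + O\!\left(\frac{1}{\varphi(q)}\right).
\]
Maximizing over $(a,q)=1$ and summing over $q\le Q$: the first term is $(\log x)^{-1}$ times the $\theta$-version of Bombieri--Vinogradov, hence $\ll x/(\log x)^{A+1}$; the $O(1/\varphi(q))$ term sums to $O(\log x)$; and for the integral I would pull the maximum inside and use $\max_a|E(t;q,a)| \le E^*(x;q)$ uniformly in $t$ to get
\[
\sum_{q\le Q}\max_a\int_2^x\frac{|E(t;q,a)|}{t\log^2 t}\,dt \le \bigg(\sum_{q\le Q}E^*(x;q)\bigg)\int_2^x \frac{dt}{t\log^2 t} \ll \sum_{q\le Q}E^*(x;q),
\]
which is again handled by the strong form from the first step. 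Combining these contributions yields the stated bound.

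No serious obstacle arises; the argument is essentially bookkeeping. The only mild technicality is choosing $B=B(A)$ large enough to absorb the fixed logarithmic losses at each intermediate step (notably the $(\log x)^{B-2}$ loss when switching from $\psi$ to $\theta$, and the $O(1)$ loss from $\int_2^x dt/(t\log^2 t)$) while still producing an overall saving of $(\log x)^{-A}$. The essential feature that makes the partial-summation step run cleanly is the availability of the strong form of Bombieri--Vinogradov with $\max_{y\le x}$ inside the absolute value, which collapses the integral over $t\in[2,x]$ into a single uniform quantity $E^*(x;q)$ per modulus and avoids any delicate decomposition of the range of integration.
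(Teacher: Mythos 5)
Your proof is correct, and it fills in exactly the standard derivation the paper refers to without reproducing: the paper simply cites \cite[Theorem 17.1]{ik04} for the $\psi$-version and \cite[Corollary 1.4]{AH} for a worked derivation of the $\pi$-version, whereas you carry out that derivation ($\psi \to \theta$ via prime-power removal, then $\theta \to \pi$ via partial summation), correctly emphasizing that the strong form with $\max_{y\le x}$ inside is what makes the Abel-summation step go through cleanly. The only slip is cosmetic: summing $O(\sqrt{x}\log x)$ over $q \le x^{1/2}(\log x)^{-B}$ gives $O(x/(\log x)^{B-1})$ rather than $O(x/(\log x)^{B-2})$, but since $B$ is chosen large in terms of $A$ this does not affect anything.
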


\begin{proof}[Deduction of Theorem~\ref{shifted primes intro version} from Theorem~\ref{main theorem sieve version}.]

Let $\tilde{\P}$ be the set of all prime numbers, and set $\tilde{\A} = \{p - a \colon p \text{ prime},\, p>a\}$. Define a (strongly) multiplicative function $h(d)$ by setting $h(p) = \frac p{p-1}$ if $p \nmid a$ and $h(p) = 0$ if $p \mid a$. Then, with $X=\li(x)\sim\pi(x)$, we have
\[
\#\A_d = \pi(x; d, a) = \frac{h(d)}{d}\li(x) + E_d(x)
\]
where, if $(a, d) = 1$, then $E_d(x) = \pi(x; d, a) - \frac{\li(x)}{\varphi(d)}$ is the error term in the prime number theorem for arithmetic progressions, while if $(a, d) > 1$, then $|E_d(x)| \le 1$. Therefore, for any parameter $z=x^{o(1)}$,
%
%
\[
\mu^\P(1)^{k} \sum_{d \in \D_{k}(\P)} |E_d(x)| \le \bigg( \sum_{p\le z} \frac1{p-1} \bigg)^{k} \sum_{d \le z^{k}} |E_d(x)|
 \ll (\log\log x)^{k} \frac{x}{(\log x)^2} \ll X \mfK^{k/2 - 1}
\]
by the Bombieri--Vinogradov bound~\eqref{pi BV}.
The rest of the proof is the same as the deduction of Theorem~\ref{main theorem intro version} from Theorem~\ref{main theorem sieve version} above.
\end{proof}

From now on, we concern ourselves entirely with the proof of Theorem~\ref{moment theorem}; in particular, we assume for the rest of this paper that the finite sets $\A$ and $\P$ and the additive functions $g_1, \ldots, g_\ell$ satisfy all the hypotheses of Theorem~\ref{moment theorem}.

\section{Polynomial accounting}

We attack the $m$th moment $M_m$ by expanding the $m$th power in each summand. In this section we present the system we use for writing down this expansion, starting by defining a few helpful objects and pieces of notation. For any given $a\in\A$, define $f_r(a)$ to be the completely multiplicative function of $r$ (not of $a$) that satisfies
\begin{equation}  \label{f def}
f_p(a) = \begin{cases} 1-h(p)/p, &\text{if } p\mid a, \\ -h(p)/p, &\text{if } p \nmid a.\end{cases}
\end{equation}
(We note that this function was first introduced by Granville and Soundararajan in \cite{gs07}, where it served largely the same role as it will for us.) For any strongly additive function $g$, define the notation
\begin{equation} \label{F_g def}
F_g^\P(a) = \sum_{p\in\P} g(p) f_p(a),
\end{equation}
and note that equation~\eqref{mean} implies that $g^\P(a) = \mu^\P(g) + F^\P_g(a)$. Therefore, from Definition~\ref{Mh def} and equation~\eqref{AQP def},
\begin{align}\label{Mm with mus and Fs}
M_m &= \sum_{a\in\A} \big( Q(g_1^{{\P}}(a),\dots,g_\ell^{{\P}}(a)) - A_Q(\P) \big)^m \nonumber \\
&= \sum_{a\in\A} \big( Q(F_{g_1}^\P(a)+\mu^\P(g_1),\dots,F_{g_\ell}^\P(a)+\mu^\P(g_\ell)) - Q(\mu^\P(g_1),\dots,\mu^\P(g_\ell)) \big)^m.
\end{align}

We are now ready to expand the $m$th power. In addition to setting out the necessary notation for writing down this expansion, this section establishes a key result, Proposition~\ref{Rh magic Phi lemma}, which allows us to simplify the main term of the moments $M_m$ into the form of Theorem~\ref{moment theorem}. We note that this part of our proof is extremely similar to~\cite[Section~4]{mt18}; indeed, we need only quote two relevant definitions and two relevant results from that section, beginning with~\cite[Definition~4.1]{mt18}:

\begin{definition} \label{Tk def}
%
For any positive even integer $k$, define $T_k$ to be the set of all 2-to-1 functions from $\{1,\dots,k\}$ to $\{1,\dots,k/2\}$. A typical element of $T_k$ will be denoted by $\tau$.
For $\tau\in T_k$ and $j\in\{1,\dots,k/2\}$, define $\Upsilon_1(j)$ and $\Upsilon_2(j)$ to be the two preimages in $\{1,\dots,k\}$; we will never be in a situation where we need to distinguish them from each other.
\end{definition}

We also quote~\cite[Definition~4.5]{mt18}:

\begin{definition} \label{Rh def}
Given a polynomial $Q(y_1,\dots,y_\ell) \in \R[y_1,\dots,y_\ell]$ of degree $\delta$, and a positive integer $m$, define a polynomial in the $2\ell$ variables $x_1,\dots,x_\ell,y_1,\dots,y_\ell$ by
\[
R_m(x_1,\dots,x_\ell,y_1,\dots,y_\ell) = \big( Q(x_1+y_1,\dots,x_\ell+y_\ell) - Q(y_1,\dots,y_\ell) \big)^m.
\]
To expand this out in gruesome detail, $R_m$ can be written as the sum of $B_m$ monomials, the $\beta$th of which will have total $x$-degree equal to $k_{m\beta}$ and total $y$-degree equal to $\tilde k_{m\beta}$:
\begin{equation} \label{Rh expanded}
R_m(x_1,\dots,x_\ell,y_1,\dots,y_\ell) = \sum_{\beta=1}^{B_m} r_{m\beta} \prod_{i=1}^{k_{m\beta}} x_{v(m,\beta, i)} \prod_{j=1}^{\tilde k_{m\beta}} y_{w(m,\beta, j)}.
\end{equation}
Here each $v(m,\beta, i)$ and $w(m,\beta, j)$ is an integer in $\{1,\dots,\ell\}$. Note that the total $x$-degree of the $\beta$th monomial in the sum is $k_{m\beta}$, while its total $y$-degree is $\tilde k_{m\beta}$. We note that, in particular, each $k_{m\beta}$ is at least $m$. Note also that each $k_{m\beta}+\tilde k_{m\beta}$ is at most $\delta m$, and that there exists at least one $\beta$ for which $k_{m\beta}=\tilde k_{m\beta}=m$.
\end{definition}

\begin{remark}
Comparing Definition \ref{Rh def} with the expression (\ref{Mm with mus and Fs}), in our calculation we will take $x_i = F^\P_{g_i}(a)$ and $y_i = \mu^\P(g_i)$, for each $1 \leq i \leq \ell$. Thus the right-hand side of equation (\ref{Rh expanded}) becomes a sum of products of $\mu^\P(g_i)$ and $F^\P_{g_j}(a)$ for various choices of $1 \leq i, j \leq \ell$.
\end{remark}

Note that the monomial in $R_m$ of smallest $x$-degree has $x$-degree equal to $m$. It turns out that these terms contribute to the main term of the moments $M_m$. The following proposition will allows us to prove this fact; it appears as~\cite[Proposition 4.7]{mt18}, and its proof can be found there. We use $Q_j(y_1, \ldots, y_\ell)$ to denote the partial derivative of $Q(y_1,\dots,y_\ell)$  with respect to the $j$th variable.

\begin{prop} \label{Rh magic Phi lemma}
Let $m$ be a positive even integer, and let $z_{ij}$ ($1\le i,j\le\ell$) be real numbers. In the notation of Definitions~\ref{Tk def} and~\ref{Rh def},
\begin{multline} \label{after applying Phi forms}
\frac1{(m/2)!} \sum_{\substack{\beta\le B_m \\ k_{m\beta}=m}} r_{m\beta} \prod_{j=1}^{\tilde k_{m\beta}} y_{w(m,\beta, j)} \sum_{\tau\in T_m} \prod_{i=1}^{m/2} z_{v(m,\beta,\Upsilon_1(i))v(m,\beta,\Upsilon_2(i))} \\
= C_m \bigg( \sum_{i=1}^\ell \sum_{j=1}^\ell Q_i(y_1,\dots,y_\ell) Q_j(y_1,\dots,y_\ell) z_{ij} \bigg)^{m/2},
\end{multline}
where $C_m$ is as in Definition~\ref{Cm def}.
\end{prop}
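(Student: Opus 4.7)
The plan is to view the left-hand side as a combinatorial ``contraction'' operator applied to the $x$-degree-$m$ homogeneous piece of $R_m(x,y)$, and then evaluate both ingredients explicitly.

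First I would identify that homogeneous piece. Taylor expanding $Q$ around $y$ yields
\[
Q(x_1+y_1,\dots,x_\ell+y_\ell) - Q(y_1,\dots,y_\ell) = \sum_{i=1}^{\ell} Q_i(y_1,\dots,y_\ell)\,x_i + H(x,y),
\]
where every monomial of $H$ has $x$-degree at least $2$. Raising to the $m$-th power, the minimum attainable $x$-degree is exactly $m$, and its homogeneous component comes entirely from $\bigl(\sum_i Q_i(y)\,x_i\bigr)^m$. Comparing with the expansion in Definition~\ref{Rh def} gives, as a polynomial identity,
\[
\sum_{\substack{\beta\le B_m \\ k_{m\beta}=m}} r_{m\beta} \prod_{i=1}^{m} x_{v(m,\beta, i)} \prod_{j=1}^{\tilde k_{m\beta}} y_{w(m,\beta, j)} = \bigg(\sum_{i=1}^{\ell} Q_i(y_1,\dots,y_\ell)\,x_i\bigg)^{\!m}.
\]

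Next, define the linear operator $\Phi$ on polynomials homogeneous of degree $m$ in $x$ (with coefficients in $\R[y_1,\dots,y_\ell]$) by
\[
\Phi\bigl(x_{i_1}\cdots x_{i_m}\bigr) = \frac{1}{(m/2)!} \sum_{\tau\in T_m} \prod_{j=1}^{m/2} z_{i_{\Upsilon_1(j)},\,i_{\Upsilon_2(j)}}.
\]
By construction, the left-hand side of \eqref{after applying Phi forms} is precisely $\Phi$ applied to the polynomial identified in the previous step, so it equals $\Phi\bigl((\sum_i Q_i(y)\,x_i)^m\bigr)$. The remaining task is to compute this explicitly: multinomial expansion gives
\[
\bigg(\sum_i Q_i(y)\,x_i\bigg)^{\!m} = \sum_{(i_1,\dots,i_m)\in\{1,\dots,\ell\}^m} Q_{i_1}(y)\cdots Q_{i_m}(y)\,x_{i_1}\cdots x_{i_m},
\]
and after applying $\Phi$ and swapping the sums, for each fixed $\tau\in T_m$ the $m/2$ pairs $\{\Upsilon_1(j),\Upsilon_2(j)\}$ partition $\{1,\dots,m\}$ into disjoint blocks of size two. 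The corresponding multi-index sum therefore factors into $m/2$ independent copies of $\sum_{a,b=1}^\ell Q_a(y)Q_b(y)\,z_{a,b}$, producing
\[
\prod_{j=1}^{m/2}\bigg(\sum_{a,b=1}^{\ell} Q_a(y)Q_b(y)\,z_{a,b}\bigg) = \bigg(\sum_{i,j=1}^{\ell} Q_i(y)Q_j(y)\,z_{i,j}\bigg)^{\!m/2},
\]
independently of $\tau$. Multiplying by the prefactor $1/(m/2)!$ and by $|T_m|=m!/2^{m/2}$ (the number of $2$-to-$1$ surjections $\{1,\dots,m\}\to\{1,\dots,m/2\}$) yields exactly the constant $C_m=m!/\bigl(2^{m/2}(m/2)!\bigr)$, completing the proof.

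There is no genuine obstacle; the argument is pure polynomial bookkeeping once the two independent operations — extraction of the $x$-degree-$m$ homogeneous piece of $R_m$ and the pairing contraction $\Phi$ — are separated cleanly. Conceptually, the identity is Isserlis' (Wick's) formula applied to the Gaussian linear form $\sum_i Q_i(y) X_i$ with covariance $E[X_iX_j]=z_{ij}$, which explains the appearance of the Gaussian moment $C_m$ on the right-hand side.
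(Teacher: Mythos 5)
Your proof is correct and self-contained. The paper does not actually prove this proposition — it cites Proposition 4.7 of \cite{mt18} and defers to that paper — but your route (Taylor-expand $Q$ about $y$ to isolate the linear piece $\sum_i Q_i(y)x_i$, identify the minimal $x$-degree component of $R_m$ as $\bigl(\sum_i Q_i(y)x_i\bigr)^m$, then apply the pairing contraction over $T_m$ so that the sum over $m$-tuples factors and the combinatorial factor $|T_m|/(m/2)! = (m!/2^{m/2})/(m/2)! = C_m$ drops out) is exactly the natural Wick/Isserlis argument that the companion paper uses; there is no genuine difference of method. The only small point worth making explicit, which you implicitly rely on, is that the contraction operator $\Phi$ is well-defined on monomials independently of the chosen ordering of the factors $x_{i_1},\dots,x_{i_m}$: for any permutation $\sigma$ of $\{1,\dots,m\}$ the map $\tau \mapsto \tau\circ\sigma^{-1}$ is a bijection of $T_m$, so $\sum_{\tau\in T_m}\prod_{j}z_{i_{\Upsilon_1(j)},i_{\Upsilon_2(j)}}$ is symmetric in $(i_1,\dots,i_m)$, and hence applying $\Phi$ linearly to both the $\beta$-indexed form of the degree-$m$ piece and to the direct $\ell^m$-fold expansion of $\bigl(\sum_i Q_i(y)x_i\bigr)^m$ gives the same result.
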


\section{Preliminary calculations}

As we have seen, expanding the $m$th power in $M_m$ will result in a sum of terms, each of which takes the shape of a product of means $\mu^\P(g_i)$ and functions $F^\P_{g_j}(a)$ for various choices $1 \leq i, j \leq \ell$. In this section, we obtain formulas for these products. When $m$ is even, those terms which contribute to the main term of the $m$th moment will involve a product of $m/2$ of the functions $F^{\P}_{g_i}(a)$. Proposition \ref{summing products of k Fs} allows us to write such products in terms of covariances $\kappa(g_i, g_j)$, thereby (after some rearranging) recovering, via Proposition~\ref{Rh magic Phi lemma}, the main term stated in Theorem~\ref{moment theorem}. We also obtain upper bounds for those terms not contributing to this main term. Our method here is deeply inspired by the work of Granville and Soundararajan in~\cite{gs07}.

We begin by defining two new functions which serve important technical roles (these functions were called $G(n)$ and $E(r,s)$ in~\cite{gs07}, but we have renamed them to avoid a clash of notation herein).

\begin{definition}\label{H and E definition}
Let $H(n)$ be the multiplicative function defined by
\[
H(n) = \prod_{p^\alpha \| n} \bigg( \frac{h(p)}p \bigg( 1-\frac{h(p)}p \bigg)^\alpha + \bigg( {-}\frac{h(p)}p \bigg)^\alpha \bigg( 1-\frac{h(p)}p \bigg) \bigg).
\]
For a given natural number $s$, let $J(r,s)$ be the multiplicative function of $r$ defined by
\[
J(r,s) = \prod_{\substack{p^\alpha\| r \\ p\mid s}} \bigg( \bigg( 1-\frac{h(p)}p \bigg)^\alpha - \bigg( {-}\frac{h(p)}p \bigg)^\alpha \bigg) \prod_{\substack{p^\alpha\| r \\ p\nmid s}} \bigg( {-}\frac{h(p)}p \bigg)^\alpha.
\]
\end{definition}

\noindent The following two lemmas list some properties of these two functions that will be useful in later proofs.

\begin{lemma} \label{simple H lemma}
For all primes $p$ and all positive integers $m$, $s$, and~$\alpha$:
\begin{enumerate}
\item $H(p)=0$, and $H(n)=0$ unless $n$ is squarefull.
\item $|H(p^\alpha)| \le H(p^2)$.
\item $|J(p^\alpha,s)|\le1$; furthermore, if $p\nmid s$ then $|J(p^\alpha,s)|\le h(p)/p$.
\end{enumerate}
\end{lemma}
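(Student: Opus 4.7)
The plan is to verify each of the three claims by a direct computation on the local Euler factors, exploiting the multiplicativity of $H$ and of $J(\cdot,s)$ in the first argument to reduce everything to the prime-power case, and then applying elementary inequalities valid for $u := h(p)/p \in [0,1]$.

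For part~(a), I will substitute $\alpha=1$ into the defining local factor of $H$: the two summands $\tfrac{h(p)}{p}\bigl(1-\tfrac{h(p)}{p}\bigr)$ and $\bigl({-}\tfrac{h(p)}{p}\bigr)\bigl(1-\tfrac{h(p)}{p}\bigr)$ cancel exactly. Hence $H(p)=0$, and by multiplicativity any $n$ with a prime factor of exponent $1$ forces $H(n)=0$; equivalently, $H(n)$ vanishes unless $n$ is squarefull.

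For part~(b), the key algebraic step is to factor the local factor of $H$ at $p^\alpha$ as
\[
u(1-u)\bigl[(1-u)^{\alpha-1}+(-1)^\alpha u^{\alpha-1}\bigr],
\]
which makes $H(p^2)=u(1-u)$ transparent. For $\alpha\ge 2$, the triangle inequality together with the bound $t^{\alpha-1}\le t$ for $t\in[0,1]$ and $\alpha-1\ge 1$ yields $(1-u)^{\alpha-1}+u^{\alpha-1}\le(1-u)+u=1$, and hence $|H(p^\alpha)|\le u(1-u)=H(p^2)$. The case $\alpha=1$ is immediate from part~(a).

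For part~(c), I will split on whether $p\mid s$. If $p\mid s$, the local factor of $J(p^\alpha,s)$ is $(1-u)^\alpha-(-u)^\alpha$, bounded in absolute value by $(1-u)^\alpha+u^\alpha\le(1-u)+u=1$ by the same inequality $t^\alpha\le t$ for $t\in[0,1]$; if $p\nmid s$, the local factor is $(-u)^\alpha$, whose absolute value $u^\alpha\le u=h(p)/p$ gives the stronger claimed bound. No substantive obstacle is anticipated: every step is elementary algebra, and the only mildly delicate point is noticing the factorization in part~(b) that exposes the common factor $u(1-u)$, making the comparison with $H(p^2)$ transparent.
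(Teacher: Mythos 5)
Your proposal is correct and follows essentially the same route as the paper: part (a) by direct cancellation, part (b) by the triangle inequality and the monotonicity of $t^{\alpha}$ on $[0,1]$ (you make the common factor $u(1-u)$ explicit via a factorization, whereas the paper observes that each of the two terms is decreasing in $\alpha$, but this is cosmetic), and part (c) by the same case split and elementary bounds.
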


\begin{proof}
Part (a) is obvious. As for part (b), the triangle inequality yields
\[
|H(p^\alpha)| \le \frac{h(p)}p \bigg( 1-\frac{h(p)}p \bigg)^\alpha + \bigg( \frac{h(p)}p \bigg)^\alpha \bigg( 1-\frac{h(p)}p \bigg).
\]
Since $0 \le h(p)/p \le 1$, both terms on the right-hand side are decreasing functions of $\alpha$, and so
\begin{equation} \label{Hp^2}
|H(p^\alpha)| \le \frac{h(p)}p \bigg( 1-\frac{h(p)}p \bigg)^2 + \bigg( \frac{h(p)}p \bigg)^2 \bigg( 1-\frac{h(p)}p \bigg) = \frac{h(p)}{p} \bigg( 1 - \frac{h(p)}{p} \bigg) = H(p^2).
\end{equation}
Finally, we consider
\[
J(p^\alpha, s) = \begin{cases} \left( 1-{h(p)}/p \right)^\alpha - \left( {-}{h(p)}/p \right)^\alpha, &\mbox{if } p \mid s, \\ 
\left( -{h(p)}/{p} \right)^\alpha, & \mbox{if } p \nmid s. \end{cases}
\]
As $h(p) \leq p$, the second assertion of part (c) is immediate; and, by the triangle inequality, if $p \mid s$ then
\[
|J(p^\alpha, s)| \leq \left| 1 - \frac{h(p)}{p} \right|^\alpha + \left|\frac{h(p)}{p}\right|^\alpha.
\]
The right-hand side is a decreasing function of $\alpha \geq 1$ which takes the value 1 if $\alpha = 1$, which establishes the last remaining assertion.
\end{proof}

\begin{lemma}  \label{bonus lemma}
Let $r$ be a positive integer and let $R$ be the largest squarefree divisor of~$r$. Then for any divisor $s$ of~$R$,
\[
\sum_{d \mid R} f_r(d) \frac{h(d)}{d} \prod_{p \mid R/d} \bigg( 1 - \frac{h(p)}{p} \bigg) = H(r)
\quad\text{and}\quad
\sum_{de=s} f_r(d)\mu(e) = J(r, s).
\]
\end{lemma}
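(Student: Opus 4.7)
The plan is to prove both identities by direct computation, exploiting the fact that since $R$ is squarefree, every divisor $d \mid R$ (and in particular every $d \mid s$ for $s \mid R$) is squarefree, so that each summand factors prime-by-prime. Consequently each sum becomes a product over primes $p \mid R$, after which matching against the definitions of $H$ and $J$ is immediate.

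For the first identity, write $r = \prod_{p} p^{\alpha_p}$, so $R = \prod_{p\mid r} p$. For a divisor $d \mid R$ and a prime $p \mid R$, there are two cases: either $p \mid d$, in which case $f_p(d)^{\alpha_p} = (1-h(p)/p)^{\alpha_p}$ contributes to $f_r(d)$, and $p$ contributes $h(p)/p$ to the factor $h(d)/d$; or $p \nmid d$, in which case $f_p(d)^{\alpha_p} = (-h(p)/p)^{\alpha_p}$ contributes to $f_r(d)$, and $p$ contributes $1 - h(p)/p$ to the product over $p \mid R/d$. Because $d$ is squarefree and $d \mid R$, summing over $d$ corresponds to independently choosing, for each $p \mid R$, whether $p \mid d$. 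The sum therefore factors as
\[
\prod_{p\mid R}\left[\frac{h(p)}{p}\Bigl(1-\frac{h(p)}{p}\Bigr)^{\alpha_p} + \Bigl({-}\frac{h(p)}{p}\Bigr)^{\alpha_p}\Bigl(1-\frac{h(p)}{p}\Bigr)\right],
\]
which is $\prod_{p^{\alpha}\|r} H(p^\alpha) = H(r)$ by Definition~\ref{H and E definition}.

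For the second identity, note that $\mu(e) \ne 0$ forces $e$ to be squarefree, and this is automatic since $e \mid s \mid R$. Writing $\mu(s/d) = \prod_{p\mid s,\, p\nmid d}(-1)$ and expanding $f_r(d) = \prod_{p\mid r} f_p(d)^{\alpha_p}$, I will split primes $p \mid r$ into three classes. Primes with $p \nmid s$ satisfy $p \nmid d$ for every $d \mid s$, so they contribute a fixed factor $(-h(p)/p)^{\alpha_p}$ to every term. Primes $p \mid s$ independently either divide $d$ (contributing $(1-h(p)/p)^{\alpha_p}$ to $f_r(d)$ and $+1$ to $\mu(s/d)$) or not (contributing $(-h(p)/p)^{\alpha_p}$ to $f_r(d)$ and $-1$ to $\mu(s/d)$). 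Summing over $d\mid s$ thus yields
\[
\prod_{p\mid r,\, p\nmid s}\Bigl({-}\tfrac{h(p)}{p}\Bigr)^{\alpha_p}\,\prod_{p\mid s}\left[\Bigl(1-\tfrac{h(p)}{p}\Bigr)^{\alpha_p} - \Bigl({-}\tfrac{h(p)}{p}\Bigr)^{\alpha_p}\right],
\]
which matches $J(r,s)$ exactly.

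The entire argument is essentially bookkeeping: no analytic estimate or clever identity is needed, and the ``hard part'' is merely keeping track of the three roles a prime of $r$ can play (dividing $d$, dividing $s$ but not $d$, or dividing neither $s$ nor $d$) and aligning signs with $\mu(s/d)$. The cleanliness of the factorization is what makes the multiplicative functions $H$ and $J$ the correct objects to isolate in the first place.
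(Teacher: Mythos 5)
Your proof is correct, and it is essentially the paper's argument run in reverse: the paper expands the product form of $H(r)$ (resp.\ $J(r,s)$) into a sum indexed by squarefree $d \mid R$ (resp.\ $d \mid s$), while you factor the given sum into a product over primes $p \mid R$ using the independence of the choice $p \mid d$ versus $p \nmid d$; these are the same multiplicativity bookkeeping step in opposite directions.
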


\begin{proof}
For each prime factor $p$ of $r$, let $\alpha_p$ denote the exponent of $p$ in the factorization of~$r$, so that $p^{\alpha_p} \| r$. We start with the first claimed equation. By Definition~\ref{H and E definition},
\[
H(r) = \prod_{p \mid R} \bigg( \frac{h(p)}{p} \bigg( 1 - \frac{h(p)}{p} \bigg)^{\alpha_p} + \bigg( {-} \frac{h(p)}{p} \bigg)^{\alpha_p} \bigg( 1 - \frac{h(p)}{p} \bigg) \bigg).
\]
Expanding this product results in a sum where each summand is a product over all the prime divisors of $R$. To each summand we assign a squarefree divisor $d \mid R$ by setting $d$ equal to the product of those primes contributing a factor of the form $(h(p)/p)(1 - h(p)/p)^{\alpha_p}$ to the summand; this accounts for all such divisors $d$ of $R$. Therefore,
\begin{align*}
H(r) &= \sum_{d \mid R} \bigg( \prod_{p \mid d} \frac{h(p)}{p} \bigg( 1 - \frac{h(p)}{p} \bigg)^{\alpha_p} \bigg) \bigg( \prod_{p \mid R/d} \bigg( {-} \frac{h(p)}{p} \bigg)^{\alpha_p} \bigg( 1 - \frac{h(p)}{p} \bigg) \bigg) \\
&= \sum_{d \mid R} \frac{h(d)}{d} \bigg( \prod_{p \mid d} \bigg( 1 - \frac{h(p)}{p} \bigg)^{\alpha_p} \prod_{p \mid R/d} \bigg( {-} \frac{h(p)}{p} \bigg)^{\alpha_p} \bigg) \prod_{p \mid R/d}\bigg( 1 - \frac{h(p)}{p} \bigg)\\
&= \sum_{d \mid R} \frac{h(d)}{d} f_r(d) \prod_{p \mid R/d} \bigg( 1 - \frac{h(p)}{p} \bigg)
\end{align*}
as claimed, where the last equality uses the definition~\eqref{f def} of~$f_r(d)$.

Now, suppose that $s$ is a divisor of~$R$ (hence itself squarefree). From Definition~\ref{H and E definition},
\[
J(r,s) = \prod_{p \mid R/s} \bigg( {-}\frac{h(p)}p \bigg)^{\alpha_p} \prod_{p \mid s} \bigg( \bigg( 1-\frac{h(p)}p \bigg)^{\alpha_p} - \bigg( {-}\frac{h(p)}p \bigg)^{\alpha_p} \bigg).
\]
Expanding the second product using the same method as above, we obtain
\begin{align*}
J(r,s) &= \prod_{p \mid R/s} \bigg( {-} \frac{h(p)}{p} \bigg)^{\alpha_p} \sum_{d \mid s} \bigg( \prod_{p \mid d} \bigg( 1 - \frac{h(p)}{p} \bigg)^{\alpha_p} \bigg) \bigg( \prod_{p \mid s/d} - \bigg( {-} \frac{h(p)}{p} \bigg)^{\alpha_p} \bigg) \\
&= \sum_{d \mid s} \bigg( \prod_{p \mid d} \bigg( 1 - \frac{h(p)}{p} \bigg)^{\alpha_p} \bigg) \bigg( \prod_{p \mid R/d} \bigg( {-} \frac{h(p)}{p} \bigg)^{\alpha_p} \bigg) \bigg( \prod_{p \mid s/d} -1 \bigg) \\
&= \sum_{d \mid s} f_r(d) \mu(s/d)
\end{align*}
(using equation~\eqref{f def} again), which is equivalent to the second assertion of the lemma.
\end{proof}

We now present several lemmas which will aid in the proof of Proposition \ref{summing products of k Fs}, which is the main result of this section. Proposition \ref{summing products of k Fs} handles expressions of the form
\[
\sum_{a \in \A} \prod_{j = 1}^k F^\P_{g_j}(a),
\]
providing an asymptotic formula when $k$ is even and an upper bound when $k$ is odd. Lemmas \ref{paired off lemma} through \ref{fra calculation lemma} serve to streamline the proof of Proposition \ref{summing products of k Fs}.

In the following formulas and proofs, all implied constants may depend on the positive integer $k$, which in practice will satisfy $k \leq \delta m$ (recall that $\delta$ is the degree of the polynomial~$Q$), in addition to the finite sets $\A$ and $\P$ and the constant $G$.

\begin{remark}
We note that the lemmas below are nearly identical to those in~\cite[Section 5]{mt18}, with slightly different notation; for example, the expression $\operatorname{cov}(g_1, g_2)$ in \cite{mt18} is replaced by the slightly more general $\kappa(g_1, g_2)$ here, which is defined in nearly the same way, save for the multiplicative factor $h(p)$ measuring the local densities in our sieve setup. Similarly, the function $H(n)$ in this manuscript is nearly the same as $H(n)$ in \cite{mt18}, save for the presence of this same $h(p)$. However, we give the proofs of these formulas here, as the situation in \cite{mt18} (involving one additive function with much larger mean than the others) required somewhat more complicated handling.
\end{remark}

\begin{lemma} \label{paired off lemma}
Let $k$ be a positive even integer and let $g_1, \ldots, g_k$ be any strongly additive functions. Then
\begin{multline} \label{paired off formula}
\sum_{\substack{p_1,\dots,p_k\in\P \\ p_1\cdots p_k \text{ squarefull} \\ \#\{p_1,\dots,p_k\} = k/2}} H(p_1\cdots p_k) g_1(p_1) \cdots g_k(p_k) \\
= \frac 1{(k/2)!} \sum_{\tau\in T_k} \prod_{j=1}^{k/2} \kappa^\P(g_{\Upsilon_1(j)},g_{\Upsilon_2(j)}) + O\big(\mfK^{k/2-1} \big),
\end{multline}
where $T_k$, $\Upsilon_1$, and $\Upsilon_2$ are as in Definition \ref{Tk def} and $\kappa^\P(g_i, g_j)$ is the covariance of $g_i$ and $g_j$ as defined in Section \ref{notation section}.
\end{lemma}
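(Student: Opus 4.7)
The plan is to exploit the rigid structure forced by the two constraints on $(p_1,\dots,p_k)$. Since $p_1\cdots p_k$ is squarefull, every distinct prime among the $p_i$ appears at least twice; but there are $k$ indices spread across exactly $k/2$ distinct primes, so pigeonhole forces each distinct prime to appear exactly twice. Thus every admissible tuple is determined by (i) a perfect matching of $\{1,\dots,k\}$ into $k/2$ unordered pairs, indicating which indices share a prime, together with (ii) a choice of $k/2$ distinct primes of $\P$, one per pair. Since each perfect matching corresponds to exactly $(k/2)!$ of the $2$-to-$1$ functions $\tau\in T_k$ (they differ from one another by a relabeling of the target set $\{1,\dots,k/2\}$), I would reparametrize the sum by $\tau$, introducing the compensating factor $\tfrac{1}{(k/2)!}$.

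After reparametrization, using the multiplicativity of $H$ together with $H(q^2)=\tfrac{h(q)}{q}\bigl(1-\tfrac{h(q)}{q}\bigr)$ from Definition~\ref{H and E definition}, the left-hand side of~\eqref{paired off formula} becomes
\[
\frac{1}{(k/2)!}\sum_{\tau\in T_k}\sum_{\substack{q_1,\dots,q_{k/2}\in\P\\ \text{distinct}}}\prod_{j=1}^{k/2}\frac{h(q_j)}{q_j}\Bigl(1-\frac{h(q_j)}{q_j}\Bigr) g_{\Upsilon_1(j)}(q_j) g_{\Upsilon_2(j)}(q_j).
\]
Dropping the distinctness constraint on the $q_j$'s makes the inner sum factor immediately into $\prod_{j=1}^{k/2}\kappa^\P(g_{\Upsilon_1(j)},g_{\Upsilon_2(j)})$, which is exactly the main term on the right-hand side of~\eqref{paired off formula}.

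I would then estimate the cost of removing the distinctness restriction by inclusion--exclusion. Each coincidence pattern identifies some subset of the $q_j$'s into a common value; if the pattern leaves $k/2-r$ free prime variables (so $r\ge 1$ mergers have occurred), then one ``merged'' $q$-sum takes the form $\sum_{q\in\P}[\tfrac{h(q)}{q}(1-\tfrac{h(q)}{q})]^{r+1}\prod g_{j_i}(q)$, while the remaining $k/2-r-1$ independent sums collapse to ordinary covariances $\kappa^\P(g_a,g_b)\le \mfK$. Using the trivial bounds $[\tfrac{h(q)}{q}(1-\tfrac{h(q)}{q})]^{r+1}\le\tfrac{h(q)}{q}(1-\tfrac{h(q)}{q})$ and $g_j(p)\le G$, the merged sum is itself $O_k(\mfK)$, so the whole coincidence pattern contributes $O_k(\mfK^{k/2-r})\le O_k(\mfK^{k/2-1})$; summing over the $O_k(1)$ many patterns and the $O_k(1)$ many $\tau\in T_k$ produces the error term claimed.

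The one place requiring care is the bookkeeping in the merged sum: one must check that a sum such as $\sum_p [h(p)/p]^s g_{j_1}(p)\cdots g_{j_{2s}}(p)$ with $s\ge2$ is indeed controlled by $\mfK$, rather than by some potentially larger higher moment of the $g_j$. This is immediate from $h(p)/p\le1$ and $g_j(p)\le G$, which let me demote every surplus factor back to $(h(p)/p)(1-h(p)/p)$ times a single covariance, at the cost of an absolute constant depending on $k$ and $G$. With that step dispatched, the identity and error estimate assemble directly into the statement of Lemma~\ref{paired off lemma}.
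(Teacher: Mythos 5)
Your proposal is correct and follows essentially the same route as the paper: parametrize the admissible tuples by a $2$-to-$1$ function $\tau\in T_k$ together with $k/2$ distinct primes, use multiplicativity of $H$ and $H(q^2)=\frac{h(q)}{q}\bigl(1-\frac{h(q)}{q}\bigr)$ to reduce to a sum of products of single-prime factors, and identify the main term with $\prod_j \kappa^\P(g_{\Upsilon_1(j)},g_{\Upsilon_2(j)})$.

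The one organizational difference is in how you remove the distinctness constraint on the $q_j$. You drop it all at once and control the cost by inclusion--exclusion over coincidence patterns (partitions of $\{1,\dots,k/2\}$), bounding each block's contribution by $O(\mfK)$. The paper instead sums iteratively over $q_{k/2}, q_{k/2-1}, \ldots$, absorbing the excluded values into an $O(1)$ at each stage, and then expands $\prod_j(\kappa^\P+O(1))$. Both decompositions are valid and yield the same main term and the same error $O(\mfK^{k/2-1})$. One small imprecision in your write-up: you describe a coincidence pattern as having ``one merged $q$-sum'' of exponent $r+1$ plus $k/2-r-1$ singletons, but in general a coincidence pattern can have several nontrivial blocks (e.g., $q_1=q_2$ and $q_3=q_4$ simultaneously). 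Your bound still goes through because each block --- whatever its size $s\ge 1$ --- contributes a sum
\[
\sum_{q\in\P}\Bigl[\tfrac{h(q)}{q}\bigl(1-\tfrac{h(q)}{q}\bigr)\Bigr]^{s}\prod_{i} g_{j_i}(q)\ll_{G,k}\mfK,
\]
so the number of blocks is all that matters and the nontrivial patterns have at most $k/2-1$ of them. With that phrasing tightened, the argument is complete.
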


\begin{proof}
To each $k$-tuple $(p_1,\dots,p_k)$ counted by the sum on the left-hand side, we can uniquely associate a $(k/2)$-tuple $(q_1,\dots,q_{k/2})$ of primes satisfying $q_1 < \cdots < q_{k/2}$ and each $q_j$ equals exactly two $p_i$ (at least two because $p_1\cdots p_k$ is squarefull, and then exactly two because $\#\{p_1,\dots,p_k\} = k/2$). This defines a unique $\tau\in T_k$, namely $\tau(i)$ equals the integer $j$ such that $p_i=q_j$. Therefore
\begin{align}
\sum_{\substack{p_1,\dots,p_k\in\P \\ p_1\cdots p_k \text{ squarefull} \\ \#\{p_1,\dots,p_k\} = k/2}} H(p_1\cdots p_k) & g_1(p_1) \cdots g_k(p_k) \notag \\
&= \sum_{\tau\in T_k} \sum_{\substack{q_1 < \cdots < q_{k/2} \\ q_j\in\P}} H(q_1^2 \cdots q_{k/2}^2) g_1(q_{\tau(1)}) \cdots g_k(q_{\tau(k)}) \label{!} \\
&= \frac1{(k/2)!} \sum_{\tau\in T_k} \sum_{\substack{q_1, \dots, q_{k/2} \in\P \\ q_1,\dots,q_{k/2} \text{ distinct}}} H(q_1^2 \cdots q_{k/2}^2) g_1(q_{\tau(1)}) \cdots g_k(q_{\tau(k)}). \notag
\end{align}
By the multiplicativity of $H$ and its values~\eqref{Hp^2} on squares of primes, we see that
\begin{align}
\sum_{\substack{p_1,\dots,p_k\in\P \\ p_1\cdots p_k \text{ squarefull} \\ \#\{p_1,\dots,p_k\} = k/2}} & H(p_1\cdots p_k) g_1(p_1) \cdots g_k(p_k) \notag \\
&= \frac1{(k/2)!} \sum_{\tau\in T_k} \sum_{\substack{q_1, \dots, q_{k/2} \in\P \\ q_1,\dots,q_{k/2} \text{ distinct}}} \bigg( \prod_{j=1}^{k/2} \frac{h(q_j)}{q_j} \bigg( 1-\frac{h(q_j)}{q_j} \bigg) \bigg) g_1(q_{\tau(1)}) \cdots g_k(q_{\tau(k)}) \notag \\
&= \frac1{(k/2)!} \sum_{\tau\in T_k} \sum_{\substack{q_1, \dots, q_{k/2} \in\P \\ q_1,\dots,q_{k/2} \text{ distinct}}} \prod_{j=1}^{k/2} g_{\Upsilon_1(j)}(q_j) g_{\Upsilon_2(j)}(q_j) \frac{h(q_j)}{q_j} \bigg( 1-\frac{h(q_j)}{q_j} \bigg). \label{hi}
\end{align}
If we fix $\tau$ and $q_1,\dots,q_{k/2-1}$, the innermost sum over $q_{k/2}$ is
\begin{multline*}
\sum_{\substack{q_{k/2}\in\P \\ q_{k/2} \notin \{q_1,\dots,q_{k/2-1}\}}} g_{\Upsilon_1(k/2)}(q_{k/2}) g_{\Upsilon_2(k/2)}(q_{k/2}) \frac{h(q_{k/2})}{q_{k/2}} \bigg( 1-\frac{h(q_{k/2})}{q_{k/2}} \bigg) \\
= \kappa^\P(g_{\Upsilon_1(k/2)},g_{\Upsilon_2(k/2)}) + O(1).
\end{multline*}
Summing in turn over $q_{k/2-1},\dots,q_1$ in the same way, equation~\eqref{hi} becomes
\begin{align}
\sum_{\substack{p_1,\dots,p_k\in\P \\ p_1\cdots p_k \text{ squarefull} \\ \#\{p_1,\dots,p_k\} = k/2}} H( & p_1\cdots p_k) g_1(p_1) \cdots g_k(p_k) \notag \\
&= \frac 1{(k/2)!} \sum_{\tau\in T_k} \prod_{j=1}^{k/2} \big( \kappa^\P(g_{\Upsilon_1(j)},g_{\Upsilon_2(j)}) + O(1) \big) \notag \\
&= \frac 1{(k/2)!} \sum_{\tau\in T_k} \bigg( \prod_{j=1}^{k/2} \kappa^\P(g_{\Upsilon_1(j)},g_{\Upsilon_2(j)}) + O( \mfK^{k/2-1}) \bigg) \label{@}
\end{align}
as desired, since each $\kappa^\P(g_{\Upsilon_1(j)},g_{\Upsilon_2(j)}) \ll \mfK$ by Definition~\ref{mf def}.
\end{proof}

\begin{lemma}
\label{unpaired off lemma}
Let $k$ be an integer, and let $g_1, \ldots, g_k$ be strongly additive functions satisfying $0\le g_j(p) \leq G$ for all primes $p$ and all $1 \leq j \leq k$. When $k$ is even,
\begin{equation*}
\sum_{\substack{p_1,\dots,p_k\in\P \\ p_1\cdots p_k \text{ squarefull} \\ \#\{p_1,\dots,p_k\} < k/2}} H(p_1\cdots p_k) g_1(p_1) \cdots g_k(p_k) \ll \mfK^{k/2-1},
\end{equation*}
while when $k$ is odd,
\begin{equation*}
\sum_{\substack{p_1,\dots,p_k\in\P \\ p_1\cdots p_k \text{ squarefull} \\ \#\{p_1,\dots,p_k\} < k/2}} H(p_1\cdots p_k) g_1(p_1) \cdots g_k(p_k) \ll \mfK^{(k-1)/2}.
\end{equation*}
\end{lemma}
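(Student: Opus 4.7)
The plan is to run the argument of Lemma~\ref{paired off lemma} while exploiting the fact that having strictly fewer than $k/2$ distinct primes among $p_1,\dots,p_k$ cuts down the number of covariance factors we can produce. First, I would group the tuples $(p_1,\dots,p_k)$ counted on the left-hand side by the number $s$ of distinct primes appearing (so $s<k/2$; and in the odd-$k$ case the squarefull condition alone forces $s\le(k-1)/2$, since otherwise $k=\sum \alpha_j$ with every $\alpha_j=2$ would be even) and by the set partition $\Pi_1,\dots,\Pi_s$ of $\{1,\dots,k\}$ indicating which indices $i$ satisfy $p_i=q_j$, where $q_1,\dots,q_s$ enumerates the distinct primes. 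The squarefull hypothesis forces $|\Pi_j|\ge 2$ for each $j$, and the number of admissible partitions is bounded by a constant depending only on $k$.

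For each fixed partition, I would invoke Lemma~\ref{simple H lemma}(b) and the multiplicativity of $H$ to obtain the pointwise bound
\[
|H(p_1\cdots p_k)| \;=\; \prod_{j=1}^s |H(q_j^{|\Pi_j|})| \;\le\; \prod_{j=1}^s H(q_j^2) \;=\; \prod_{j=1}^s \frac{h(q_j)}{q_j}\bigg(1-\frac{h(q_j)}{q_j}\bigg).
\]
Because each $|\Pi_j|\ge 2$, I can single out two distinguished indices $i_1(j),i_2(j)\in\Pi_j$ and absorb the remaining $|\Pi_j|-2$ factors $g_i(q_j)$ into the implicit constant using the uniform bound $g_i(q_j)\le G$. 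What remains is the sum
\[
\sum_{\substack{q_1,\dots,q_s\in\P\\ \text{distinct}}} \prod_{j=1}^s g_{i_1(j)}(q_j)\,g_{i_2(j)}(q_j) \,\frac{h(q_j)}{q_j}\bigg(1-\frac{h(q_j)}{q_j}\bigg),
\]
which I would estimate by peeling off one prime at a time, exactly as in the final step of Lemma~\ref{paired off lemma}: the distinctness constraint contributes $O(1)$ errors per collision via inclusion--exclusion, so each inner sum produces a factor $\kappa^\P(g_{i_1(j)},g_{i_2(j)})+O(1)\ll \mfK$, yielding a total of $O(\mfK^s)$.

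Summing over the $O_k(1)$ choices of partition then gives the stated bound, since $s\le k/2-1$ in the even case and $s\le(k-1)/2$ in the odd case. The main obstacle I anticipate is purely bookkeeping, not analytic: organizing the parametrization by set partitions, selecting the distinguished pairs $(i_1(j),i_2(j))$ coherently, and tracking the inclusion--exclusion error arising from the distinctness requirement. Since each of these steps mirrors a calculation already carried out in Lemma~\ref{paired off lemma}, no genuinely new technical difficulty should emerge.
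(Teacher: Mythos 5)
Your proposal is correct and follows essentially the same route as the paper: the parameterization by set partitions $\Pi_1,\dots,\Pi_s$ is equivalent to the paper's parameterization by multiplicity vectors $(\alpha_1,\dots,\alpha_s)$ and maps $\tau\in T_\alpha$, and the key steps (bounding $|H|$ by $H$ on squares via Lemma~\ref{simple H lemma}(b), absorbing the $k-2s$ excess $g$-factors with the uniform bound $G$, and peeling off covariance factors one prime at a time to get $O(\mfK^s)$) are identical.
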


\begin{proof}
To each $k$-tuple $(p_1,\dots,p_k)$ counted by the sum, we associate the positive integer $s = \#\{p_1,\dots,p_k\}$, the primes $q_1<\cdots<q_s$ in $\P$ such that $\{q_1,\dots,q_s\} = \{p_1,\dots,p_k\}$, and the integers $\alpha_1,\dots,\alpha_s\ge2$ such that $q_j$ equals exactly $\alpha_j$ of the~$p_i$. Let $T_\alpha$ denote the set of functions from $\{1,\dots,k\}$ to $\{1,\dots,s\}$ such that for each $1\le j\le s$, exactly $\alpha_j$ elements of $\{1,\dots,k\}$ are mapped to~$j$. Define $\Upsilon_1(j)$ and $\Upsilon_2(j)$ to be two of the preimages in $\{1,\dots,k\}$; we will never put ourselves in a situation where we need to know which two. We then have
\begin{multline*}
\sum_{\substack{p_1,\dots,p_k\in\P \\ p_1\cdots p_k \text{ squarefull} \\ \#\{p_1,\dots,p_k\} < k/2}} H(p_1\cdots p_k) g_1(p_1) \cdots g_k(p_k) \\
= \sum_{1\le s<k/2} \sum_{\substack{\alpha_1,\dots,\alpha_s\ge2 \\ \alpha_1+\cdots+\alpha_s=k}} \sum_{\tau\in T_\alpha} \sum_{\substack{q_1 < \cdots < q_{k/2} \\ q_j\in\P}} H(q_1^{\alpha_1}\cdots q_s^{\alpha_s}) g_1(q_{\tau(1)}) \cdots g_k(q_{\tau(k)}).
\end{multline*}
By Lemma~\ref{simple H lemma}(b), $|H(q_1^{\alpha_1} \cdots q_s^{\alpha_s})| \leq H(q_1^2 \cdots q_s^2)$; note also that
\[
\sum_{\tau \in T_\alpha} g_1(q_{\tau(1)}) \cdots g_k(q_{\tau(k)}) \leq G^{k - 2s} \prod_{j = 1}^s g_{\Upsilon_1(j)}(q_j) g_{\Upsilon_2(j)}(q_j).
\]
Therefore
\begin{multline*}
\sum_{\substack{p_1,\dots,p_k\in\P \\ p_1\cdots p_k \text{ squarefull} \\ \#\{p_1,\dots,p_k\} < k/2}} H(p_1\cdots p_k) g_1(p_1) \cdots g_k(p_k) \\
\ll \sum_{1\le s<k/2} \sum_{\substack{\alpha_1,\dots,\alpha_s\ge2 \\ \alpha_1+\cdots+\alpha_s=k}} \sum_{\tau\in T_\alpha} \sum_{\substack{q_1 < \cdots < q_{k/2} \\ q_j\in\P}} H(q_1^2 \cdots q_s^2) g_{\Upsilon_1(j)}(q_j) g_{\Upsilon_2(j)}(q_j).
\end{multline*}
The innermost double sum can be evaluated just as in equations~\eqref{!} and~\eqref{@}, yielding
\begin{align*}
\sum_{\substack{p_1,\dots,p_k\in\P \\ p_1\cdots p_k \text{ squarefull} \\ \#\{p_1,\dots,p_k\} < k/2}} H(p_1\cdots p_k) & g_1(p_1) \cdots g_k(p_k) \\
&\ll \sum_{1\le s<k/2} \sum_{\substack{\alpha_1,\dots,\alpha_s\ge2 \\ \alpha_1+\cdots+\alpha_s=k}} \frac 1{s!} \sum_{\tau\in T_\alpha} \prod_{j=1}^s \big( \kappa^\P(g_{\Upsilon_1(j)},g_{\Upsilon_2(j)}) + O(1) \big) \\
&\ll \sum_{1\le s<k/2} \sum_{\substack{\alpha_1,\dots,\alpha_s\ge2 \\ \alpha_1+\cdots+\alpha_s=k}} \frac 1{s!} \sum_{\tau\in T_\alpha} \mfK^s \ll \mfK^{\max\{s\in\N\colon s<k/2\}}
\end{align*}
as desired, since the implicit constant is allowed to depend upon~$k$.
\end{proof}

\begin{lemma} \label{fra calculation lemma}
For any positive integer $r$, we have
\[
\sum_{a\in\A} f_r(a) = H(r)X + \sum_{s\mid r} \mu^2(s) J(r,s) E_s,
\]
where $E_s$ is defined by equation~\eqref{sieveequation}.
\end{lemma}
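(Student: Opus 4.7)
The plan is to unpack the definition of the completely multiplicative function $f_r(\cdot)$, recast $\sum_{a\in\A} f_r(a)$ as a linear combination of the sieve counts $\#\A_s$ over squarefree $s\mid r$, and then identify the resulting coefficients as $H(r)$ and $J(r,s)$ via Lemma~\ref{bonus lemma} and Definition~\ref{H and E definition}.

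First, for each prime power $p^\alpha\| r$, I would use the trivial decomposition $1 = \mathbf{1}_{p\mid a} + \mathbf{1}_{p\nmid a}$ to write
\[
f_p(a)^\alpha = \Big({-}\tfrac{h(p)}{p}\Big)^{\!\alpha} + \mathbf{1}_{p\mid a}\bigg[\Big(1-\tfrac{h(p)}{p}\Big)^{\!\alpha} - \Big({-}\tfrac{h(p)}{p}\Big)^{\!\alpha}\bigg].
\]
Letting $R$ denote the squarefree radical of $r$ and multiplying these expressions over all $p\mid R$, expansion produces a sum indexed by the squarefree divisors $s\mid R$, where $s$ records precisely the primes at which the indicator term was chosen. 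The product of the indicators $\prod_{p\mid s}\mathbf{1}_{p\mid a}$ collapses to $\mathbf{1}_{s\mid a}$, and the remaining coefficient matches Definition~\ref{H and E definition} on the nose, yielding the key identity
\[
f_r(a) = \sum_{s\mid R} \mathbf{1}_{s\mid a}\, J(r,s).
\]
Since $\mu^2(s) = 1$ for exactly the squarefree divisors of $r$, this is equivalent to $\sum_{s\mid r}\mu^2(s)\mathbf{1}_{s\mid a}\,J(r,s)$.

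Second, I would sum over $a\in\A$ and apply the sieve approximation~\eqref{sieveequation}:
\[
\sum_{a\in\A} f_r(a) = \sum_{s\mid R} J(r,s)\,\#\A_s = X\sum_{s\mid R}J(r,s)\frac{h(s)}{s} + \sum_{s\mid r}\mu^2(s)\,J(r,s)\,E_s.
\]
It then remains to verify that $\sum_{s\mid R} J(r,s) h(s)/s = H(r)$. The quickest route is to invoke the second identity of Lemma~\ref{bonus lemma} to substitute $J(r,s) = \sum_{de=s}f_r(d)\mu(e)$; switching the order of summation and using multiplicativity of $h$ (noting that $d$ and $e$ are coprime as divisors of the squarefree $s$), the sum rearranges as
\[
\sum_{d\mid R} f_r(d)\frac{h(d)}{d}\sum_{e\mid R/d}\frac{\mu(e)h(e)}{e} = \sum_{d\mid R} f_r(d)\frac{h(d)}{d}\prod_{p\mid R/d}\Big(1-\frac{h(p)}{p}\Big),
\]
which is exactly $H(r)$ by the first identity of Lemma~\ref{bonus lemma}.

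There is no real obstacle in this argument; the only care required is in the combinatorial bookkeeping when expanding the product over primes dividing $R$, after which both halves of Lemma~\ref{bonus lemma} combine cleanly to complete the identification of the main term.
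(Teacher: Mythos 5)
Your proof is correct, and it takes a genuinely different (and arguably slicker) route than the paper's. The paper groups $a\in\A$ according to the value of $(a,R)$, uses M\"obius inversion to pass from the exact-gcd count to the divisor counts $\#\A_{de}$, applies the sieve approximation, and then invokes \emph{both} halves of Lemma~\ref{bonus lemma}: the first to identify the main-term coefficient as $H(r)$, the second to recognize the error-term coefficient as $J(r,s)$. You instead begin by expanding the product $f_r(a)=\prod_{p^\alpha\|r}f_p(a)^\alpha$ at the level of the function itself, which immediately yields the clean pointwise identity $f_r(a)=\sum_{s\mid R}\mathbf{1}_{s\mid a}\,J(r,s)$; after summing over $a\in\A$ and applying equation~\eqref{sieveequation}, the $E_s$-term already appears in the desired form with no further work, and only the main-term coefficient $\sum_{s\mid R}J(r,s)h(s)/s = H(r)$ needs to be identified --- which you do by combining both halves of Lemma~\ref{bonus lemma} after switching orders of summation. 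The two arguments are essentially dual: the paper uses M\"obius to peel off the exact-gcd structure after summing over $\A$, whereas you precompute that M\"obius expansion inside $f_r(a)$ before the sum over $\A$ even begins. Your version has the small advantage that the error term falls out for free, at the cost of needing both identities of Lemma~\ref{bonus lemma} for the main term rather than one for each; either is a perfectly sound proof of the lemma.
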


\begin{proof}
This is~\cite[equation~(13)]{gs07}; for completeness, we give the full argument here. Let $R$ denote the largest squarefree divisor of $r$, and note from equation~\eqref{f def} that $f_r(a) = f_r(d)$ whenever $(a,R)=d$. Then
\begin{align*}
\sum_{a \in \A} f_r(a) &= \sum_{d \mid R} f_r(d) \sum_{\substack{a \in \A \\ (a, R) = d}} 1 = \sum_{d \mid R} f_r(d) \sum_{\substack{a \in \A \\ d\mid a}} \sum_{e\mid(a/d,R/d)} \mu(e) \\
&= \sum_{d \mid R} f_r(d) \sum_{e\mid R/d} \mu(e) \sum_{\substack{a \in \A \\ de\mid a}} 1 \\
&= \sum_{d \mid R} f_r(d) \sum_{e\mid R/d} \mu(e) \bigg( \frac{h(de)}{de} X + E_{de} \bigg)
\end{align*}
by equation~\eqref{sieveequation}. Since $R$ is squarefree, we have $h(de)=h(d)h(e)$, and therefore
\begin{align*}
\sum_{a \in \A} f_r(a) &= X\sum_{d \mid R} f_r(d) \frac{h(d)}{d} \sum_{e\mid R/d} \frac{\mu(e)h(e)}e + \sum_{d \mid R} f_r(d) \sum_{e \mid R/d} \mu(e)E_{de} \\
&= X\sum_{d \mid R} f_r(d) \frac{h(d)}{d} \prod_{p \mid R/d} \bigg( 1 - \frac{h(p)}{p} \bigg) + \sum_{s\mid R} E_s \sum_{de=s} f_r(d)\mu(e) \\
&= H(r)X + \sum_{s \mid R} E_s J(r, s)
\end{align*}
by Lemma~\ref{bonus lemma}, which is equivalent to the assertion of the lemma.
\end{proof}

\begin{lemma} \label{same as their error term lemma}
Let $k$ be a positive integer, and let $g_1, \ldots, g_k$ be strongly additive functions satisfying $0\le g_j(p) \leq G$ for all primes $p$ and all $1 \leq j \leq k$. Then
\[
\sum_{p_1,\dots,p_k \in\P} g_1(p_1) \cdots g_k(p_k) \sum_{s\mid p_1\cdots p_k} \mu^2(s) J(p_1\cdots p_k,s) E_s \ll \mu^\P(1)^k \sum_{d\in \D_k(\P)} |E_d|,
\]
where $E_s$ and $E_d$ are defined in equation~\eqref{sieveequation}.
\end{lemma}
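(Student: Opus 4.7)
The plan is to swap the order of summation so that the sieve error terms $E_s$ appear on the outside, and then bound the remaining inner sum using the pointwise estimates on $J$ provided by Lemma~\ref{simple H lemma}(c). The squarefree indicator $\mu^2(s)$ and the constraint $s\mid p_1\cdots p_k$ force $s$ to range exactly over $\D_k(\P)$, which is convenient since this is precisely the index set appearing in the target bound.

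First, I would exchange the double sum and pull $E_s$ outside, rewriting the left-hand side as
\[
\sum_{s\in\D_k(\P)} E_s \sum_{\substack{p_1,\dots,p_k\in\P \\ s\mid p_1\cdots p_k}} g_1(p_1)\cdots g_k(p_k)\, J(p_1\cdots p_k,s).
\]
Because $J(\cdot,s)$ is multiplicative in its first argument, if we write $p_1\cdots p_k=\prod_q q^{\alpha_q}$, Lemma~\ref{simple H lemma}(c) gives $|J(q^{\alpha_q},s)|\le 1$ when $q\mid s$ and $|J(q^{\alpha_q},s)|=(h(q)/q)^{\alpha_q}$ when $q\nmid s$. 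Regrouping the latter factor back into a product over the indices $j$ with $p_j\nmid s$, I obtain the clean pointwise bound
\[
|J(p_1\cdots p_k,s)| \le \prod_{j=1}^{k} \Bigl( \mathbf{1}_{p_j\mid s} + \mathbf{1}_{p_j\nmid s}\, \tfrac{h(p_j)}{p_j}\Bigr).
\]

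Combining this with $0\le g_j(p_j)\le G$ and then dropping the constraint $s\mid p_1\cdots p_k$ (an upper bound, since every term is nonnegative), the inner sum factors as
\[
G^k \prod_{j=1}^k \Bigl( \sum_{\substack{p\in\P\\ p\mid s}} 1 + \sum_{\substack{p\in\P\\ p\nmid s}}\tfrac{h(p)}{p} \Bigr) \le G^k\bigl(\omega(s)+\mu^\P(1)\bigr)^k.
\]
Since $s\in\D_k(\P)$, we have $\omega(s)\le k$; hence $(\omega(s)+\mu^\P(1))^k \ll_k \mu^\P(1)^k$, using that the implicit constants are permitted to depend on $k$ and $G$ (and in any intended application $\mu^\P(1)\to\infty$, so this comparison is harmless). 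Summing over $s$ then yields the asserted bound
\[
\sum_{s\in\D_k(\P)} |E_s| \cdot G^k\bigl(\omega(s)+\mu^\P(1)\bigr)^k \ll \mu^\P(1)^k \sum_{d\in\D_k(\P)} |E_d|.
\]

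The main technical point to get right is the upper bound for $|J(p_1\cdots p_k,s)|$: one must be careful that repetitions among the $p_j$ correspond to higher prime powers $q^{\alpha_q}\|p_1\cdots p_k$, and verify that the inequality $|J(q^{\alpha_q},s)|\le (h(q)/q)^{\alpha_q}$ (for $q\nmid s$) converts back cleanly into a product of factors $h(p_j)/p_j$ indexed by $j$ rather than by distinct primes $q$. Once that bookkeeping is in place, the rest of the argument is the standard decoupling of a nonnegative product into a product of sums, which is why the final estimate takes the shape of $\mu^\P(1)^k$ times the sum of $|E_d|$ over $\D_k(\P)$.
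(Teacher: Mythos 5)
Your proof is correct and takes essentially the same route as the paper's: swap the sums to bring $E_s$ to the outside, bound $|J(p_1\cdots p_k,s)|$ via its multiplicative structure by a product of factors $h(p_i)/p_i$ over the indices $i$ with $p_i\nmid s$ (using the exact value of $J(p^\alpha,s)$ from Definition~\ref{H and E definition} rather than just the stated inequality in Lemma~\ref{simple H lemma}(c), which both you and the paper implicitly do), and then decouple the inner sum into a product of $k$ copies of $\mu^\P(1)$ plus a bounded contribution from the prime factors of~$s$. The only cosmetic difference is that the paper "arranges" the primes $p_1,\dots,p_\ell$ to equal the prime factors of $s$ at the cost of a combinatorial constant, whereas you drop the constraint $s\mid p_1\cdots p_k$ outright and factor the sum directly; both land on the same $(\omega(s)+\mu^\P(1))^k\ll_k\mu^\P(1)^k$ estimate, which (as you correctly flag) implicitly uses that $\mu^\P(1)\gg1$, an assumption the paper also makes silently.
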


\begin{proof}
By Lemma \ref{simple H lemma}(c), 
\[
|J(p_1\cdots p_k,s)| \leq \prod_{\substack{1\le i\le k \\ p_i \nmid s}} \frac{h(p_i)}{p_i}.
\]
Since $g_j(p_j) \ll_G 1$ for all $j$,
\begin{align*}
\sum_{p_1,\dots,p_k \in\P} g_1(p_1) \cdots g_k(p_k) \sum_{s\mid p_1\cdots p_k} &J(p_1\cdots p_k,s) E_s  \\ &\ll \sum_{\ell=0}^k \sum_{\substack{s = q_1 \cdots q_\ell \geq 1 \\ q_1 < \cdots < q_\ell \in \P}} |E_s| \sum_{\substack{p_1, \ldots, p_k \in \P \\ s \mid p_1 \cdots p_k}} \prod_{\substack{1\le i\le k \\ p_i \nmid s}} \frac{h(p_i)}{p_i}.
\end{align*}
At the cost of a constant depending only on $k$, we can arrange the primes $p_1, \ldots, p_k$ so that $p_1 = q_1$,  \dots, $p_\ell = q_\ell$. Summing the factor $h(p_i)/p_i$ over the remaining primes $p_i$ for $\ell+1\le i\le k$ yields $\mu^\P(1)^{k - \ell} \ll \mu^\P(1)^k$. Each term $|E_s|$ is equal to $|E_d|$ for some integer $d \in \D_k(\P)$, and therefore the total error term is $\ll \mu^\P(1)^k \sum_{d \in \D_k(\P)} |E_d|$, as desired.
\end{proof}

WIth these preliminary calculations out of the way, we may now establish the main proposition of this section, which will feature prominently in the proof of Theorem~\ref{moment theorem} in the next section.


\begin{prop}
\label{summing products of k Fs}
Let $g_1, \ldots, g_k$ be strongly additive functions satisfying $0 \leq g_j(p) \leq G$ for all primes $p$ and all $1 \leq j \leq k$. Let $F_g^\P$ be as in equation~\eqref{F_g def}, and let $k$ be a positive integer. If $k$ is even, then
\begin{align*}
\sum_{a\in\A} \prod_{j=1}^k F_{g_j}^\P(a) = \frac X{(k/2)!} & \sum_{\tau\in T_k} \prod_{j=1}^{k/2} \kappa^\P(g_{\Upsilon_1(j)},g_{\Upsilon_2(j)}) \\
&+ O\bigg( X \mfK^{k/2-1} + \mu^\P(1)^k \sum_{d\in \D_k(\P)} |E_d| \bigg).
\end{align*}
If $k$ is odd, then
\[
\sum_{a\in\A} \prod_{j=1}^k F_{g_j}^\P(a) \ll X \mfK^{(k-1)/2} + \mu^\P(1)^k \sum_{d\in \D_k(\P)} |E_d|.
\]
\end{prop}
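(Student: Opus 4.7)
The plan is to expand the product $\prod_{j=1}^k F_{g_j}^\P(a)$ using the definition~\eqref{F_g def}, interchange the order of summation, and then invoke the four preceding lemmas as black boxes. The only real work is keeping track of how the error estimates combine.

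First, by the definition of $F_{g_j}^\P$ and the complete multiplicativity of $f_r$ in the variable $r$, I would write
\[
\prod_{j=1}^k F_{g_j}^\P(a) = \sum_{p_1,\dots,p_k\in\P} g_1(p_1)\cdots g_k(p_k)\, f_{p_1}(a)\cdots f_{p_k}(a) = \sum_{p_1,\dots,p_k\in\P} g_1(p_1)\cdots g_k(p_k)\, f_{p_1\cdots p_k}(a),
\]
and then swap the $a$-sum inside. Applying Lemma~\ref{fra calculation lemma} with $r=p_1\cdots p_k$ splits the result into a ``main'' piece involving $H(p_1\cdots p_k)$ and an ``error'' piece involving $J(p_1\cdots p_k,s)E_s$:
\[
\sum_{a\in\A}\prod_{j=1}^k F_{g_j}^\P(a) = X\!\!\sum_{p_1,\dots,p_k\in\P}\!\! g_1(p_1)\cdots g_k(p_k) H(p_1\cdots p_k) + \sum_{p_1,\dots,p_k\in\P} g_1(p_1)\cdots g_k(p_k)\!\sum_{s\mid p_1\cdots p_k}\!\mu^2(s) J(p_1\cdots p_k,s) E_s.
\]

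Next, I would observe that Lemma~\ref{same as their error term lemma} immediately bounds the second summand above by $\mu^\P(1)^k\sum_{d\in\D_k(\P)}|E_d|$, which is exactly the error term appearing in the statement of the proposition (for both parities of $k$). So it only remains to analyze the $H$-sum.

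Here I would use Lemma~\ref{simple H lemma}(a), which says $H(p_1\cdots p_k)=0$ unless $p_1\cdots p_k$ is squarefull, to restrict the $H$-sum to squarefull $k$-tuples. I then partition those tuples according to whether $\#\{p_1,\dots,p_k\} = k/2$ (the ``fully paired'' case, which can occur only when $k$ is even) or $\#\{p_1,\dots,p_k\} < k/2$ (the ``collapsed'' case). For even $k$, Lemma~\ref{paired off lemma} handles the paired part, producing the main term $\frac{1}{(k/2)!}\sum_{\tau\in T_k}\prod_{j=1}^{k/2}\kappa^\P(g_{\Upsilon_1(j)},g_{\Upsilon_2(j)})$ up to an error of size $O(\mfK^{k/2-1})$, and Lemma~\ref{unpaired off lemma} bounds the collapsed part by $O(\mfK^{k/2-1})$ as well; multiplying by $X$ gives the stated main term plus $O(X\mfK^{k/2-1})$. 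For odd $k$, there is no paired contribution (squarefull forces $\#\{p_1,\dots,p_k\}\le (k-1)/2$), so Lemma~\ref{unpaired off lemma} alone yields the bound $O(X\mfK^{(k-1)/2})$ for the $H$-sum.

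There is no genuine obstacle here: all the combinatorial and arithmetic difficulty has been absorbed into Lemmas~\ref{paired off lemma}--\ref{same as their error term lemma}. The only point requiring mild care is the final assembly, namely verifying that the $X$-multiplied main error from Lemma~\ref{paired off lemma} and the $\mu^\P(1)^k\sum|E_d|$ bound from Lemma~\ref{same as their error term lemma} are both of the sizes claimed, and that the odd-$k$ case correctly drops the main term and replaces $\mfK^{k/2-1}$ by $\mfK^{(k-1)/2}$.
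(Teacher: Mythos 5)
Your proposal follows the paper's own proof exactly: expand the product and swap sums, apply Lemma~\ref{fra calculation lemma} to split into the $H$-term and the $J$-error term, bound the latter by Lemma~\ref{same as their error term lemma}, restrict the $H$-sum to squarefull tuples via Lemma~\ref{simple H lemma}(a), and partition by $\#\{p_1,\dots,p_k\}$ so that Lemmas~\ref{paired off lemma} and~\ref{unpaired off lemma} finish the even case while Lemma~\ref{unpaired off lemma} alone handles the odd case. This is correct and mirrors the paper's argument step for step.
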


\begin{proof}
Expanding out the definition,
\[
\sum_{a\in\A} \prod_{j=1}^k F_{g_j}^\P(a) = \sum_{a\in\A} \prod_{j=1}^k \sum_{p\in\P} g(p)f_p(a) =  \sum_{p_1,\dots,p_k \in\P} g_1(p_1) \cdots g_k(p_k) \sum_{a\in\A} f_{p_1\cdots p_k}(a).
\]
By Lemma~\ref{fra calculation lemma},
\begin{align*}
\sum_{a\in\A} \prod_{j=1}^k F_{g_j}^\P(a) &= \sum_{p_1,\dots,p_k \in\P} g_1(p_1) \cdots g_k(p_k) \bigg( H(p_1\cdots p_k)X + \sum_{s\mid p_1\cdots p_k} \mu^2(s) J(p_1\cdots p_k,s) E_s \bigg) \notag \\
&= X \sum_{p_1,\dots,p_k \in\P} H(p_1\cdots p_k) g_1(p_1) \cdots g_k(p_k) + O\bigg( \mu^\P(1)^k \sum_{d\in \D_k(\P)} |E_d| \bigg),
\end{align*}
using Lemma~\ref{same as their error term lemma} to obtain the error term. Since $H(p_1\cdots p_k)$ vanishes unless $p_1\cdots p_k$ is squarefull by Lemma~\ref{simple H lemma}(a), there are at most $k/2$ distinct primes among $p_1,\dots,p_k$, and so we can write
\begin{multline*}
\sum_{p_1,\dots,p_k \in\P} H(p_1\cdots p_k) g_1(p_1) \cdots g_k(p_k) = \sum_{\substack{p_1,\dots,p_k\in\P \\ p_1\cdots p_k \text{ squarefull} \\ \#\{p_1,\dots,p_k\} = k/2}} H(p_1\cdots p_k) g_1(p_1) \cdots g_k(p_k) \\
+ \sum_{\substack{p_1,\dots,p_k\in\P \\ p_1\cdots p_k \text{ squarefull} \\ \#\{p_1,\dots,p_k\} < k/2}} H(p_1\cdots p_k) g_1(p_1) \cdots g_k(p_k).
\end{multline*}
The proposition now follows upon appeals to Lemmas~\ref{paired off lemma} and~\ref{unpaired off lemma} (and the observation that the first sum on the right-hand side is empty if $k$ is odd).
\end{proof}

\begin{cor}
\label{upper bound cor}
Under the hypotheses of Theorem~\ref{moment theorem}, for any positive integer $k$, 
\[
\sum_{a\in\A} \prod_{j=1}^k \big| F_{g_j}^\P(a) \big| \ll \begin{cases}
X \mfK^{k/2}, &\text{if $k$ is even}, \\
X \mfK^{(k-1)/2}, &\text{if $k$ is odd}.
\end{cases}
\]
\end{cor}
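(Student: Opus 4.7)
The plan is to pass from the absolute-value product to sums of pure $k$th-power moments of individual $|F^\P_{g_j}(a)|$, to which Proposition~\ref{summing products of k Fs} (possibly combined with Cauchy--Schwarz when the exponent is odd) applies.

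By the AM--GM inequality applied pointwise,
\[
\prod_{j=1}^k |F^\P_{g_j}(a)| \leq \frac{1}{k}\sum_{j=1}^k |F^\P_{g_j}(a)|^k,
\]
so after summing over $a\in\A$ the task reduces to bounding $\sum_{a\in\A}|F^\P_{g_j}(a)|^k$ for each fixed $j$.

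When $k$ is even, $|F^\P_{g_j}(a)|^k = F^\P_{g_j}(a)^k$, and Proposition~\ref{summing products of k Fs}, applied with all $k$ inputs equal to $g_j$, gives $\sum_a F^\P_{g_j}(a)^k \ll X\mfK^{k/2}$ once the sieve-error term is absorbed. When $k$ is odd, $|F^\P_{g_j}(a)|^k$ is no longer a polynomial moment; I apply Cauchy--Schwarz via the factorization $|F|^k = |F|^{(k-1)/2}\cdot|F|^{(k+1)/2}$ to obtain
\[
\sum_a |F^\P_{g_j}(a)|^k \leq \bigg(\sum_a F^\P_{g_j}(a)^{k-1}\bigg)^{1/2}\bigg(\sum_a F^\P_{g_j}(a)^{k+1}\bigg)^{1/2},
\]
whose two factors are now even-exponent moments accessible to Proposition~\ref{summing products of k Fs}. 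The resulting bounds $X\mfK^{(k-1)/2}$ and $X\mfK^{(k+1)/2}$ have geometric mean $X\mfK^{k/2}$.

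The main obstacle is the routine bookkeeping required to absorb the sieve-remainder term $\mu^\P(1)^k\sum_{d\in\D_k(\P)}|E_d|$ from Proposition~\ref{summing products of k Fs} into the main term $X\mfK^{k/2}$. The inclusion $\D_k(\P)\subseteq\D_{\delta m}(\P)$ for the values $k\le\delta m$ that arise in applications, together with $\mu^\P(1)\gg 1$ for large $x$, reduces this to hypothesis~\eqref{stronger error bound} of Theorem~\ref{moment theorem}. This plan yields the uniform bound $\sum_a\prod_j|F^\P_{g_j}(a)|\ll X\mfK^{k/2}$, matching the even-$k$ statement of the corollary; for the sharper odd-$k$ bound $X\mfK^{(k-1)/2}$ asserted above, one would need to extract additional cancellation beyond what AM--GM and Cauchy--Schwarz preserve.
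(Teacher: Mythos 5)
Your honest flag at the end is precisely the point: the AM--GM/H\"older/Cauchy--Schwarz route can at best yield $X\mfK^{k/2}$ uniformly in $k$, and no rearrangement of absolute values will recover the sharper $X\mfK^{(k-1)/2}$ for odd $k$. The improvement in the odd case comes from genuine \emph{sign} cancellation in the signed sum $\sum_{a}\prod_j F^\P_{g_j}(a)$: after expanding and applying Lemma~\ref{fra calculation lemma}, the fact that $H(p)=0$ forces the surviving main term to be supported on squarefull $p_1\cdots p_k$, which for odd $k$ can have at most $(k-1)/2$ distinct prime factors, and Lemma~\ref{unpaired off lemma} then gives $\mfK^{(k-1)/2}$. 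That cancellation is destroyed the instant one places absolute values on the individual factors $F^\P_{g_j}(a)$, which is exactly what your reduction does at the outset. The case $k=1$ makes this concrete: $\sum_{a\in\A}\bigl|F^\P_{g}(a)\bigr|$ is $\asymp X\sigma^\P(g)\asymp X\mfK^{1/2}$, not $\ll X$, so the corollary as literally printed cannot hold for odd~$k$.

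What the paper's own proof actually establishes (and what Proposition~\ref{summing products of k Fs} directly controls) is the version with the absolute-value bars \emph{outside} the sum, namely $\bigl|\sum_{a\in\A}\prod_{j=1}^k F^\P_{g_j}(a)\bigr| \ll X\mfK^{k/2}$ ($k$ even) or $X\mfK^{(k-1)/2}$ ($k$ odd), obtained by bounding each $|\kappa^\P(g_i,g_j)|\le\mfK$ in the main term and absorbing the remainder via hypothesis~\eqref{stronger error bound} (after the reduction from $\delta m$ down to $k$ that you also carried out). The placement of the absolute values in the printed statement is evidently a typo. That outer-absolute-value version suffices for the proof of Theorem~\ref{moment theorem}: the coefficients $r_{m\beta}$ of $R_m$ are all nonnegative (since $Q$ has nonnegative coefficients and $(T+y)^d - y^d$ has nonnegative coefficients), so the triangle inequality across the $\beta$-sum never requires moving absolute values inside the product over~$i$. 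So your argument does prove a true bound, $\sum_a\prod_j|F^\P_{g_j}(a)|\ll X\mfK^{k/2}$, but it is a different statement from what the moment computation for odd $m$ actually requires, and the mechanism that powers the odd-$k$ savings --- cancellation via $H(p)=0$ --- is absent from your approach by design.
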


\begin{proof}
First, note that if the estimate~\eqref{stronger error bound} holds, then it holds with $\delta m$ replaced throughout by any integer $k \leq \delta m$. Indeed, under the hypotheses of Theorem~\ref{moment theorem}, we have $\mu^\P(1) \geq \tfrac{1}{G} \mu^\P(g_j) \asymp \mfK$ for any $1 \leq j \leq k$ (as in the deduction of Theorem~\ref{main theorem sieve version}). Since $\D_k(\P) \subseteq \D_{\delta m}(\P)$, we see that
\begin{align*}
\mu^\P(1)^{k} \sum_{d \in \D_k(\P)} |E_d| &\leq \frac1{\mu^\P(1)^{\delta m - k}} \cdot {\mu^\P(1)^{\delta m}} \sum_{d \in \D_{\delta m}(\P)} |E_d| \\
&\ll \frac1{\mu^\P(1)^{\delta m - k}} \cdot {X \mfK^{\delta m/2 - 1}} \\
&\ll X \mfK^{k - \delta m/2 - 1} \leq X \mfK^{k/2 - 1}.
\end{align*}

If $k$ is even, then by Proposition~\ref{summing products of k Fs} and the triangle inequality,
\begin{align*}
\sum_{a\in\A} \prod_{j=1}^k \big| F_{g_j}^\P(a)\big| &\le \frac X{(k/2)!} \sum_{\tau\in T_k} \prod_{j=1}^{k/2} \mfK + O\bigg( X \mfK^{k/2-1} +  \mu^\P(1)^k \sum_{d\in \D_k(\P)} |E_d| \bigg) \\
&\ll X\mfK^{k/2} + O\bigg( X \mfK^{k/2-1} + \mu^\P(1)^k \sum_{d\in \D_k(\P)} |E_d| \bigg) \ll X \mfK^{k/2}
\end{align*}
by equation~\eqref{stronger error bound}. On the other hand, if $k$ is odd, then again by Proposition~\ref{summing products of k Fs} and equation~\eqref{stronger error bound},
\begin{align*}
\sum_{a\in\A} \prod_{j=1}^k \big| F_{g_j}^\P(a)\big| &\ll X\mfK^{(k-1)/2} + \mu^\P(1)^k \sum_{d\in \D_k(\P)} |E_d| \ll X\mfK^{(k-1)/2},
\end{align*}
as desired.
\end{proof}

\section{Calculating the moments}

We are finally ready to compute the moments, using the technology built up in prior sections. The reader will observe that the computation ends up being rather brief, thanks to this earlier work. By equation~\eqref{Mm with mus and Fs} and Definition~\ref{Rh def},
\begin{align}\label{Mh expanded}
M_m &= \sum_{a\in\A} \big( Q(F_{g_1}^\P(a)+\mu^\P(g_1),\dots,F_{g_\ell}^\P(a)+\mu^\P(g_\ell)) - Q(\mu^\P(g_1),\dots,\mu^\P(g_\ell)) \big)^m \notag \\
&= \sum_{a\in\A} R_m \big( F_{g_1}^\P(a),\dots,F_{g_\ell}^\P(a),\mu^\P(g_1),\dots,\mu^\P(g_\ell) \big) \nonumber \\
&= \sum_{a\in\A} \sum_{\beta=1}^{B_m} r_{m\beta} \prod_{i=1}^{k_{m\beta}} F_{g_{v(m,\beta, i)}}^\P(a) \prod_{j=1}^{\tilde k_{m\beta}} \mu^\P(g_{w(m,\beta, j)}) \nonumber \\
&= \sum_{\beta=1}^{B_m} r_{m\beta} \prod_{j=1}^{\tilde k_{m\beta}} \mu^\P(g_{w(m,\beta, j)}) \sum_{a\in\A} \prod_{i=1}^{k_{m\beta}} F_{g_{v(m,\beta, i)}}^\P(a),
\end{align}
where we used equation~\eqref{Rh expanded} in the middle equality.

We analyze the expression~\eqref{Mh expanded} to obtain Theorem \ref{moment theorem}, splitting our work into two cases depending on whether $m$ is even or odd. We remind the reader that all implied constants may depend on the polynomial $Q$, the constant $G$ from the hypotheses of Theorem \ref{moment theorem}, the positive integer~$m$, and the finite set~$\A$.

\begin{proof}[Proof of Theorem \ref{moment theorem} when $m$ is even]
Our goal is to show that
\[
M_m = C_m X B_Q(\P)^m + O\big( X \mfM^{\delta m-(m+1)/2} \big).
\]
We first isolate the terms in the expression~\eqref{Mh expanded} with $k_{h\beta}=m$, the minimum possible (see the discussion following Definition~\ref{Rh def}), with the goal of showing that these terms contribute the main term of~$M_m$. By Proposition~\ref{summing products of k Fs}, we have for these terms
\begin{align*}
\prod_{i=1}^{k_{m\beta}} & F_{g_{v(m,\beta, i)}}^\P(a) \\
&= \frac X{(m/2)!} \sum_{\tau\in T_m} \prod_{i=1}^{m/2} \kappa^\P(g_{v(m,\beta,\Upsilon_1(i))},g_{v(m,\beta,\Upsilon_2(i))}) + O\bigg( X\mfK^{m/2-1} + \mu^\P(1)^m \sum_{d\in \D_m(\P)} |E_d| \bigg) \\
&= \frac X{(m/2)!} \sum_{\tau\in T_m} \prod_{i=1}^{m/2} \kappa^\P(g_{v(m,\beta,\Upsilon_1(i))},g_{v(m,\beta,\Upsilon_2(i))}) + O\big( X\mfK^{m/2-1} \big)
\end{align*}
since we assume the error bound~\eqref{stronger error bound} (which, as discussed in the proof of Corollary~\ref{upper bound cor}, holds for the smaller integer $m$ since it is assumed to hold for $\delta m$). Therefore
\begin{align}
\sum_{\substack{\beta\le B_m \\ k_{m\beta}=m}} r_{m\beta} & \prod_{j=1}^{\tilde k_{m\beta}} \mu^\P(g_{w(m,\beta, j)}) \sum_{a\in\A} \prod_{i=1}^{k_{m\beta}} F_{g_{v(m,\beta, i)}}^\P(a) \notag \\
&= \sum_{\substack{\beta\le B_m \\ k_{h\beta}=m}} r_{m\beta} \prod_{j=1}^{\tilde k_{m\beta}} \mu^\P(g_{w(m,\beta, j)}) \notag \\
&\qquad{}\times \bigg( \frac X{(m/2)!} \sum_{\tau\in T_m} \prod_{i=1}^{m/2} \kappa^\P(g_{v(m,\beta,\Upsilon_1(i))},g_{v(m,\beta,\Upsilon_2(i))}) + O\big( X\mfK^{m/2-1} \big) \bigg) \notag \\
&= \frac X{(m/2)!} \sum_{\substack{\beta\le B_m \\ k_{m\beta}=m}} r_{m\beta} \prod_{j=1}^{\tilde k_{m\beta}} \mu^\P(g_{w(m,\beta, j)}) \sum_{\tau\in T_m} \prod_{i=1}^{m/2} \kappa^\P(g_{v(m,\beta,\Upsilon_1(i))},g_{v(m,\beta,\Upsilon_2(i))}) \notag \\
&\qquad{}+ O\big( X\mfM^{(\delta-1)m} \mfK^{m/2-1} \big), \label{m main error}
\end{align}
since $\tilde k_{m\beta} \le \delta m-k_{m\beta} = (\delta-1)m$.
By Proposition~\ref{Rh magic Phi lemma}, where $y_j$ has been replaced by $\mu^\P(g_j)$ and $z_{ij}$ by $\kappa^\P(g_i,g_j)$, this main term equals
\begin{multline} \label{m main}
C_mX \bigg( \sum_{i=1}^\ell \sum_{j=1}^\ell Q_i \big( \mu^\P(g_1),\dots,\mu^\P(g_\ell) \big) Q_j\big( \mu^\P(g_1),\dots,\mu^\P(g_\ell) \big) \kappa^\P(g_i,g_j) \bigg)^{m/2} \\
= C_mX \big( B_Q(\P)^2 \big)^{m/2}
\end{multline}
by equation~\eqref{BQP def}.
As for the error term, since we assume $\mfK \leq G \mfM$,
\begin{equation}  \label{m error}
X\mfM^{(\delta-1)m} \mfK^{m/2-1} \ll X \mfM ^{\delta m - m/2 - 1}.
\end{equation}

The remaining terms in the expression~\eqref{Mh expanded} have $m < k_{m\beta} \leq \delta m$. For these terms,
\begin{align*}
\sum_{\substack{\beta\le B_m \\ k_{m\beta}>m}} r_{m\beta} \prod_{j=1}^{\tilde k_{m\beta}} \mu^\P(g_{w(m,\beta, j)}) &\sum_{a\in\A} \prod_{i=1}^{k_{m\beta}} F_{g_{v(m,\beta, i)}}^\P(a) \\
&\ll \sum_{\substack{\beta\le B_m \\ k_{m\beta}>m}} |r_{m\beta}| \prod_{j=1}^{\tilde k_{m\beta}} \mu^\P(g_{w(m,\beta, j)}) \sum_{a\in\A} \prod_{i=1}^{k_{m\beta}} \big| F_{g_{v(m,\beta, i)}}^\P(a) \big| \\
&\ll \sum_{\substack{\beta\le B_m \\ k_{m\beta}>m}} |r_{m\beta}| \prod_{j=1}^{\tilde k_{m\beta}} \mu^\P(g_{w(m,\beta, j)}) X \mfK^{k_{m\beta}/2}
\end{align*}
by Corollary~\ref{upper bound cor} (recalling again that equation ~\eqref{stronger error bound} holds for the integers $k_{m\beta}$ under consideration), giving
\begin{equation*}
\sum_{\substack{\beta\le B_m \\ k_{h\beta}>m}} r_{m\beta} \prod_{j=1}^{\tilde k_{m\beta}} \mu^\P(g_{w(m,\beta, j)}) \sum_{a\in\A} \prod_{i=1}^{k_{m\beta}} F_{g_{v(m,\beta, i)}}^\P(a) 
\ll X \sum_{\substack{\beta\le B_m \\ k_{m\beta}>m}} |r_{m\beta}| \mfM^{\tilde k_{m\beta}} \mfK^{k_{m\beta}/2}.
\end{equation*}
Since we assume $\mfK \leq G \mfM$, this estimate becomes
\begin{equation} \label{> m}
\sum_{\substack{\beta\le B_m \\ k_{m\beta}>m}} r_{m\beta} \prod_{j=1}^{\tilde k_{m\beta}} \mu^\P(g_{w(m,\beta, j)}) \sum_{a\in\A} \prod_{i=1}^{k_{m\beta}} F_{g_{v(m,\beta, i)}}^\P(a) 
\ll X \mfM^{\delta m-(m+1)/2}
\end{equation}
since $\tilde k_{m\beta}+\frac{k_{m\beta}}2 = \tilde k_{m\beta}+k_{m\beta}-\frac{k_{m\beta}}2 \le \tilde k_{m\beta}+k_{m\beta}-\frac{m+1}2 \le \delta m-\frac{m+1}2$ for all terms in the sum.

Gathering the results of the calculations~\eqref{m main error}--\eqref{> m} into the expression~\eqref{Mh expanded} yields
\[
M_m = C_m X B_Q(\P)^m + O\big( X \mfM^{\delta m-(m+1)/2} \big)
\]
as desired.
\end{proof}

A slight modification of the above proof suffices for odd integers~$m$.

\begin{proof}[Proof of Theorem \ref{moment theorem} when $m$ is odd]
Now, our goal is to show that $M_m \ll X \mfM^{\delta m - (m + 1)/2}$. The estimate~\eqref{> m} still holds for those terms in expression~\eqref{Mh expanded} with $k_{m\beta} > m$.
For the remaining terms with $k_{m\beta} = m$, we use Corollary \ref{upper bound cor} to write
\begin{align*}
\sum_{\substack{\beta\le B_m \\ k_{m\beta}=m}} r_{m\beta} & \prod_{j=1}^{\tilde k_{m\beta}} \mu^\P(g_{w(m,\beta, j)}) \sum_{a\in\A} \prod_{i=1}^{k_{m\beta}} F_{g_{v(m,\beta, i)}}^\P(a) \\ &\ll X \mfK^{(m-1)/2}\sum_{\substack{\beta\le B_m \\ k_{m\beta}=m}} |r_{m\beta}| \prod_{j=1}^{\tilde k_{m\beta}} \mu^\P(g_{w(m,\beta, j)}) \\
&\ll X \mfK^{(m-1)/2} \mfM^{\tilde k_{m\beta}} \le X \mfK^{(m-1)/2} \mfM^{(\delta-1)m}
\end{align*}
as before. Since $\mfK \leq G \mfM$, we obtain
\[
\sum_{\substack{\beta\le B_m \\ k_{m\beta}=m}} r_{m\beta}  \prod_{j=1}^{\tilde k_{m\beta}} \mu^\P(g_{w(m,\beta, j)}) \sum_{a\in\A} \prod_{i=1}^{k_{m\beta}} F_{g_{v(m,\beta, i)}}^\P(a) \ll X \mfM^{\delta m - (m + 1)/2}
\]
as desired.
\end{proof}

\section{Examples} \label{examples sec}

We conclude by providing examples of Theorem \ref{main theorem intro version} applied to certain choices of the polynomial $Q$ which may be of independent interest. Example~\ref{powers} (which establishes Corollary~\ref{cor1}) pertains to powers of strongly additive functions; Example~\ref{products} (which establishes Corollary~\ref{cor2}) pertains to products of strongly additive functions; and Example \ref{linear combinations} pertains to linear combinations of strongly additive functions.

\begin{example}\label{powers}
Let $\delta$ be a positive integer and set $Q(T) = T^\delta$, so that $Q'(T) = \delta T ^{\delta - 1}$. Then, for any strongly additive function $g(n)$ satisfying the hypotheses of Theorem \ref{main theorem intro version}, the function $g(n)^\delta$ satisfies an Erd\H os--Kac law with mean $A_Q(x) = \mu(g)^\delta$ and variance 
\begin{align*}
B_Q(x)^2 = \big( Q'(\mu(g)) \big)^2 \kappa(g, g) = \delta^2 \mu(g)^{2\delta - 2} \sigma^2(g).
\end{align*}

For a concrete example, if $g(n) = \omega(n)$, the number-of-distinct-prime-divisors function, then $\omega(n)^\delta$ satisfies an Erd\H os--Kac law with mean $(\log\log n)^\delta$ and variance $\delta^2 (\log\log n)^{2\delta - 2}$; and more generally, the same holds for $Q(g(n))$ for any polynomial $Q(T)$ with nonnegative coefficients and leading term $T^\delta$.
\end{example}

\begin{example}\label{products}
Let $g_1(n)$ and $g_2(n)$ be strongly additive functions satisfying the hypotheses of Theorem \ref{main theorem intro version}, and set $Q(T) = T_1T_2$, so that $\frac{\partial Q}{\partial T_1} = T_2$ and $\frac{\partial Q}{\partial T_2} = T_1$. Then the product $g_1(n) g_2(n)$ satisfies an Erd\H os--Kac law with mean $A_Q(x) = \mu(g_1) \mu(g_2)$ and variance 
\begin{align*}
B_Q(x)^2 &= \sum_{i = 1}^2 \sum_{j = 1}^2 \frac{\partial Q}{\partial T_i}\big(\mu(g_1), \mu(g_2)\big) \frac{\partial Q}{\partial T_j}\big(\mu(g_1), \mu(g_2)\big) \kappa(g_i, g_j) \\
&= \mu(g_1)^2\sigma^2(g_2) + 2\mu(g_1)\mu(g_2)\kappa(g_1,g_2) + \mu(g_2)^2\sigma^2(g_1).
\end{align*}

For example, suppose $P_1$ and $P_2$ are sets of primes of positive relative densities $\alpha$ and $\beta$, such that $P_1\cap P_2$ has relative density $\gamma$ in the primes; and set $\omega_j(n) = \# \{ p\mid n \colon p\in P_j \}$ for $j=1,2$. Using the definitions~\eqref{main theorem mu kappa def}, it is easy to calculate that
\[
\mu(g_1) \sim \sigma^2(g_1) \sim \alpha\log\log x, \quad\mu(g_2) \sim \sigma^2(g_2) \sim \beta\log\log x, \quad \kappa(g_1,g_2) \sim \gamma\log\log x.
\]
Therefore $\omega_1(n)\omega_2(n)$ satisfies an Erd\H os--Kac law with mean $\alpha\beta(\log\log x)^2$ and variance $\alpha\beta(\alpha+2\gamma+\beta)(\log \log x)^3$. In particular, note that the variance depends not just on the two additive functions $g_1$ and $g_2$ individually but also upon the correlations in their values.

More generally, let $g_1(n), \dots, g_\ell(n)$ be strongly additive functions satisfying the hypotheses of Theorem~\ref{main theorem intro version}, and set $Q(T) = T_1\cdots T_\ell$, so that $\frac{\partial Q}{\partial T_j} = (T_1\cdots T_\ell)/T_j$ for all $1\le j\le\ell$. Then the function $g_1(n) \cdots g_\ell(n)$ satisfies an Erd\H os--Kac law with mean $A_Q(x) = \mu(g_1)\cdots \mu(g_\ell)$ and variance
\begin{align*}
B_Q(x)^2 &= \sum_{i = 1}^\ell \sum_{j = 1}^\ell \frac{\partial Q}{\partial T_i}\big(\mu(g_1), \dots, \mu(g_\ell)\big) \frac{\partial Q}{\partial T_j}\big(\mu(g_1), \dots, \mu(g_\ell)\big) \kappa(g_i, g_j) \\
&= \big( \mu(g_1)\cdots \mu(g_\ell) \big)^2 \sum_{1\le i,j\le \ell} \frac{\kappa(g_i,g_j)}{\mu(g_i) \mu(g_j)}.
\end{align*}
\end{example}

\begin{example}\label{linear combinations}
Again let $g_1(n)$ and $g_2(n)$ be strongly additive functions satisfying the hypotheses of Theorem \ref{main theorem intro version}; this time, set $Q(T_1, T_2) = vT_1^\delta + wT_2^\delta$ for a positive integer $\delta$ and nonnegative real numbers $v$ and $w$, so that $\frac{\partial Q}{\partial T_1} = \delta v T_1^{\delta - 1}$ and $\frac{\partial Q}{\partial T_2} = \delta w T_2^{\delta - 1}$. Then the function $vg_1(n)^\delta + wg_2(n)^\delta$ satisfies an Erd\H os--Kac law with mean $v\mu(g_1)^\delta + w\mu(g_2)^\delta$ and variance
\begin{align*}
B_Q(x)^2 &= \sum_{i = 1}^2 \sum_{j = 1}^2 \frac{\partial Q}{\partial T_i}\big(\mu(g_1), \mu(g_2)\big) \frac{\partial Q}{\partial T_j}\big(\mu(g_1), \mu(g_2)\big) \kappa(g_i, g_j) \\
&= v^2 \delta^2 \mu(g_1)^{2\delta - 2} \sigma^2(g_1) + 2vw \delta^2 \big( \mu(g_1) \mu(g_2) \big)^{\delta - 1} \kappa(g_1, g_2) + w^2 \delta^2 \mu(g_2)^{2 \delta - 2}\sigma^2(g_2).
\end{align*}

In particular, when $\delta=1$, the function $g(n) = vg_1(n)+wg_2(n)$ satisfies an Erd\H os--Kac law with mean $v\mu(g_1)+w\mu(g_2)$ and variance $v^2\sigma^2(g_1)+2vw\kappa(g_1,g_2)+w^2\sigma^2(g_2)$. But notice that $g$ itself is an additive function. Using the definitions~\eqref{main theorem mu kappa def}, it is trivial to check that $\mu(g) = v\mu(g_1)+w\mu(g_2)$, and still easy to check that $\sigma^2(g) = v^2\sigma^2(g_1)+2vw\kappa(g_1,g_2)+w^2\sigma^2(g_2)$. Therefore our theorem is self-consistent in this case.
\end{example}

\bibliographystyle{amsplain}
\bibliography{refs}

\end{document}